\documentclass[a4paper,12pt]{article}

\usepackage[T1]{fontenc}
\usepackage{lmodern}
\usepackage{microtype}
\usepackage{color}
\usepackage{amsmath,amssymb,amsthm,mathtools}
\usepackage{geometry}
\usepackage{graphicx}
\numberwithin{equation}{section}

\usepackage{enumitem}
\setlist[itemize]{leftmargin=*, itemsep=2pt, topsep=2pt}

\usepackage{hyperref}
\hypersetup{
  colorlinks=true,
  linkcolor=blue,
  citecolor=blue,
  urlcolor=blue
}
\definecolor{darkgreen}{rgb}{0,0.7,0}

\theoremstyle{plain}
\newtheorem{theorem}{Theorem}[section]
\newtheorem{proposition}[theorem]{Proposition}
\newtheorem{lemma}[theorem]{Lemma}
\newtheorem{corollary}[theorem]{Corollary}

\theoremstyle{definition}
\newtheorem{definition}[theorem]{Definition}

\theoremstyle{remark}
\newtheorem{remark}[theorem]{Remark}

\newcommand{\R}{\mathbb{R}}

\newcommand{\sgn}{\mathrm{sgn}}

\DeclareMathOperator{\sign}{sgn}

\title{Global boundary stabilization of 1d systems of scalar conservation laws}

\author{Georges Bastin\thanks{Department of Mathematical Engineering, ICTEAM, UCLouvain, Louvain-La-Neuve, Belgium.   (georges.bastin@uclouvain.be)} , Jean-Michel Coron\thanks{Laboratoire Jacques-Louis Lions, Sorbonne Universit\'{e}, Universit\'{e} de Paris, CNRS, INRIA, \'{e}quipe Cage, Paris, France. (jean-michel.coron@sorbonne-universite.fr)} \,\, and Amaury Hayat\thanks{CERMICS, Ecole des Ponts ParisTech, Champs-sur-Marne, France. (amaury.hayat@enpc.fr)}}
\date{\empty}

\begin{document}
\maketitle

\begin{abstract}
We study a system of several one-dimensional scalar conservation laws coupled through boundary feedback conditions that combine physical boundary constraints with static feedback control laws. Our first contribution establishes the well-posedness of the system in the space of $L^{\infty}$ entropy solutions.
Our second contribution provides a set of sufficient dissipative conditions on the boundary coupling that ensure global exponential stability in the $L^1$ and $L^\infty$ norms.

\noindent \textbf{Keywords:} hyperbolic systems; Scalar conservation laws; Entropic solutions, Dissipative boundary conditions.

\noindent  \textbf{2020 Mathematics Subject Classification:} 35B40; 35L40; 35L50; 35L65; 93D23; 93D30.

\end{abstract}
\tableofcontents

\section{Introduction}

Boundary feedback stabilization of hyperbolic balance laws is a classical theme in control theory, motivated by applications where actuation and sensing are naturally available at the boundaries: open-channel flow, gas pipelines, traffic flow, and more generally transport phenomena on networks.
In the smooth (classical) regime, a large body of work relates exponential stability to \emph{dissipative} boundary conditions, typically expressed through a contraction property of the boundary map in suitable norms. Reference can be made to the pioneering work of Greenberg--Li \cite{1984-Greenberg-Li-JDE}, Qin \cite{1985-Qin-CAM}, Zhao \cite{1986-Zhao-thesis}, and Li \cite{1994-Li-book} as well as to the modern synthesis and extensions in \cite{2015-Bastin-Coron-SICON,2016-Bastin-Coron-book,HayatC12019}. In parallel, an approach through time-delay systems allows for sharp dissipativity conditions in $W^{2,p}$-norm \cite{CoronNguyen2015}.

In these works the solution of the hyperbolic balance laws  is required to be classical and the stability is usually studied in either $C^{1}$ or $H^{2}$ norm. 
In many applications, however, discontinuities may occur in finite time, and the relevant notion of solution is that of \emph{entropy solutions}.
For scalar conservation laws, entropy well-posedness in $\mathbb{R}$ goes back to Kru\v{z}kov \cite{Kruzhkov1970}, and the initial--boundary value problem requires a careful entropy formulation of boundary conditions.
The Bardos--Le Roux--N\'ed\'elec (BLN) theory \cite{BLN79} provides the canonical admissibility condition at the boundary; alternative and complementary viewpoints include the viscosity/Riemann-problem approach of Dubois--LeFloch \cite{DuboisLeFloch1988} and subsequent developments in the general boundary-value framework \cite{AS15}.
A crucial technical ingredient to study boundary feedback stabilization is the existence of (strong) boundary traces for bounded entropy solutions: this is by now well established in broad generality, notably through the works of Kwon--Vasseur and Panov \cite{KV07,Panov07} and, for bounded domains with Dirichlet-type data, through the analysis of Coclite--Karlsen--Kwon \cite{CKK09}.
We also refer to standard monographs for background on hyperbolic conservation laws, BV estimates, and front-tracking/semigroup methods; see e.g.\ \cite{Bressan2000,HoldenRisebro2002}.

On the control side, stability and boundary stabilization results for non-classical solutions are comparatively more recent. For scalar laws, we can mention the asymptotic stabilization of entropy solutions (in the case of a convex flux) in \cite{Perrollaz2013}
and \cite{BlandinEtAl2017}, and the stabilization of a shock steady-state for the Burgers equation in \cite{burgers}. For systems of scalar conservation laws coupled by boundary interconnections, the semi-global exponential stability in BV norm using saturated controls is obtained in \cite{dus2021}. For $2\times 2$ systems of conservation laws, the local stabilization in BV norms is addressed in  \cite{CoronBV} while the local stabilization of $H^{2}$-solutions with a single shock is addressed in \cite{bastin2019boundary}.

In the present paper, we consider the following \textit{system of one-dimensional scalar conservation laws}
\begin{gather}
  u_t + (f(u))_x = 0, \qquad  t \in (0,+\infty), \; x \in (0,1), \label{eq:sys}\\
  u(t,0) = G(u(t,1)), \label{eq:feedback}
\end{gather}
where $u=(u_1,\dots,u_n)^\top$, the flux $f$ belongs to $C^{2}(\R^{n};\mathbb{R}^{n})$, and the map $G:\R^{n}\to\R^{n}$ is globally Lipschitz with $G(0)=0$.

We assume that the flux is diagonal i.e. $f(u)=(f_1(u_1),\dots,f_n(u_n))^\top$, which implies that \eqref{eq:sys} represents $n$ scalar conservation laws
\begin{equation}
	\partial_t u_i + \partial_x f_i(u_i) = 0 \quad i \in \{1, \dots ,n\}
\end{equation}
which are decoupled on the interior spatial domain $x \in (0,1)$ but coupled  through the boundary conditions \eqref{eq:feedback}. We also assume that the characteristic velocities are strictly positive :
\begin{equation} \label{eq:pos}
\exists \, a	 > 0 \quad \text{such that} \quad f'_{i}(s)\ge a \quad \forall s\in \R,\;\forall i\in\{1,...,n\}.
\end{equation}

The coupling of the $n$ scalar conservation laws is induced by the boundary condition \eqref{eq:feedback} with the function $G$ representing a combination of physical boundary constraints and potential static feedback control laws.
Such couplings arise naturally in networked transport models as shown for instance in \cite{2007-Bastin-et-al-NHM} and \cite[Section 4.2]{2016-Bastin-Coron-book} with an example of ramp-metering control in road traffic networks.
It should also be noted that the positivity condition \eqref{eq:pos} on the direction of the characteristics  allows to consider that the system \eqref{eq:sys}--\eqref{eq:feedback} is a closed-loop interconnection of two causal input-output systems as represented in Figure \ref{fig-CL}.
\begin{figure}[hbt]
\centerline{ \includegraphics[width=10cm]{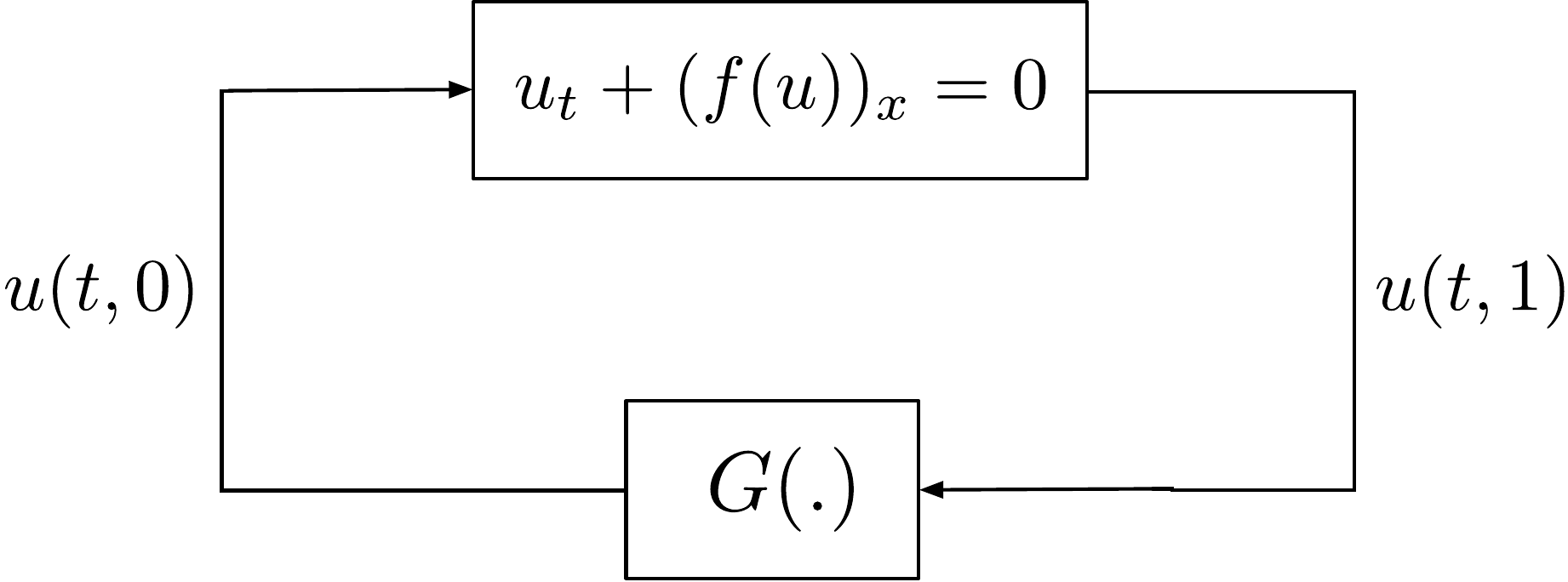}}
  \caption{A closed-loop interconnection of two causal input-output systems.}
  \label{fig-CL}
\end{figure}

\medskip
\noindent\textbf{Well-posedness with a nonlocal feedback.}
Our first contribution (Theorem~\ref{thm:main}) establishes global  well-posedness of the system \eqref{eq:sys}--\eqref{eq:feedback}, under a global Lipschitz assumption on $f$: it is shown that the system has a unique entropy solution with $u(t,0)$ and $u(t,1)$ denoting, respectively, the incoming and outgoing strong boundary traces of the solution that are shown to exist. The main difficulty to prove the theorem is that the boundary condition \eqref{eq:feedback} is non-local in space. However, since scalar conservation laws are delay-type systems, the outgoing trace $u(t,1)$ is independent, over sufficiently small time intervals, of the most recent values of the incoming signal $u(t,0)$. This allows to construct an iterative approach for proving the theorem where, for each successive time slab, the outgoing signal $u(t,1)$ is first computed as the output of an open-loop system which is known to be well-posed (in the BLN framework \cite{BLN79,CKK09,AS15} together with strong traces \cite{KV07,Panov07,CKK09}), and then injected in the feedback loop through the function $G$ to initiate the next step.
 In particular, with this approach, the non local boundary condition \eqref{eq:feedback} can be enforced without any smallness or contraction argument in the entropy setting.

\medskip
\noindent\textbf{Global exponential stability in $L^1$ and $L^\infty$.}
Our second contribution is a set of sufficient conditions on $G$ guaranteeing global exponential stability in the $L^1$ and $L^\infty$ norms respectively. Note that global exponential stability is usually impossible when using norms which are too regular (such as $C^{1}$ or $H^{2}$ norms). For this reason, almost all existing results deal with \textit{local} exponential stability \cite{2015-Bastin-Coron-SICON,2016-Bastin-Coron-book,HayatC12019,1985-Qin-CAM,CoronNguyen2015}. However, note that global exponential stability has been established for some particular semilinear systems in \cite{Hayat2021GlobalL2}.

For the $L^1$-norm, we introduce a weighted entropy/Lyapunov functional inspired by entropy-based network analyses \cite{2007-Bastin-et-al-NHM} and we derive a dissipation inequality in which the boundary term naturally involves $f_i$ (Theorem~\ref{thm:expstab}).
This yields a flux-dependent stability criterion, which is qualitatively different from the classical smooth theory where the Jacobian $G'(0)$ and matrix norms dominate the condition \cite{2016-Bastin-Coron-book,CoronNguyen2015}.
We then identify an important subclass (notably concave fluxes) for which the criterion reduces to a flux-independent weighted contraction property of $G$ in $\ell^1$ (Theorem~\ref{thm:fconcave}).

For the $L^\infty$-norm, we prove global exponential stability under a weighted $\ell^\infty$ contraction of $G$ (Theorem~\ref{thm:expstab-infty}), in the spirit of dissipative boundary conditions for hyperbolic systems \cite{2015-Bastin-Coron-SICON,2016-Bastin-Coron-book}.
A notable feature is that, unlike the global well-posedness, the $L^\infty$ stability does \emph{not} require global Lipschitzness of $f$: the boundary contraction prevents amplitude growth along successive traversals of the domain and allows global-in-time control of the solution through a truncation argument.

\medskip
\noindent\textbf{Local exponential stability in $L^\infty$.}
The global exponential stability result requires a somewhat strong assumption on the $f_{i}$ in that $f_i'(s)\ge a>0$. Nevertheless, a consequence of our result is that one can remove this constraint if we only look at the local exponential stability (see Corollary \ref{cor:linear}).

\medskip
\noindent\textbf{Organization of the paper.}
Section~\ref{sec:entropy} recalls the entropy formulation and strong trace properties.
Section~\ref{sec:open} establishes open-loop well-posedness and stability estimates, relying on \cite{CKK09}.
Section~\ref{sec:delay} proves a delay lemma reflecting finite speed of propagation.
Section~\ref{sec:closed} combines these tools into the method-of-steps construction proving Theorem~\ref{thm:main}.
Finally, Sections~\ref{sec:proof-stab-L1} and~\ref{sec:Linfty} develop Lyapunov arguments yielding global exponential stability in $L^1$ and $L^\infty$ under the proposed dissipativity conditions on $G$. Theorems~\ref{thm:expstab} and~\ref{thm:fconcave} are proved in Section \ref{sec:proof-stab-L1}, and Theorem~\ref{thm:expstab-infty} is proved in Section~\ref{sec:Linfty}.

\section{Main results}

Let $T>0$, our functional framework is
\begin{equation}
  u \in C([0,T];L^1(0,1))\cap L^\infty((0,T)\times(0,1)),
\end{equation}
where we denote $L^{1}(0,1) = L^{1}((0,1);\mathbb{R}^{n})$ and $L^{\infty}((0,1)\times(0,T)) = L^{\infty}((0,1)\times(0,T);\mathbb{R}^{n})$.
 Our first result is:

\begin{theorem}[Well-posedness of the closed-loop problem]
\label{thm:main}
Let $T>0$ and assume $u_0\in L^\infty(0,1)$. If $f$ is globally Lipschitz,
then there exists a unique entropy solution $u$ of \eqref{eq:sys}--\eqref{eq:feedback} on $(0,T)\times(0,1)$ (in the sense of Definition \ref{def:kru}) with initial condition
\begin{equation}\label{eq:ic}
  u(0,x)=u_0(x) \qquad \text{for a.e. } x\in(0,1),
\end{equation}
 such that:
\begin{itemize}
  \item $u\in C([0,T];L^1(0,1))\cap L^\infty((0,T)\times(0,1))$;
  \item $u$ admits strong traces $u(\cdot,0),u(\cdot,1)\in L^\infty(0,T)$ which satisfy the boundary condition \eqref{eq:feedback} a.e.\ on $(0,T)$.
\end{itemize}
Moreover, $u_{0}\mapsto u$ is continuous from $L^{1}(0,1)$ to $C^{0}([0,T];L^{1}(0,1))$.
\end{theorem}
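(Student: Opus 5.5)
The plan is a method of steps, as announced in the introduction. Since $f$ is globally Lipschitz, set $L:=\max_i\sup_s f_i'(s)<\infty$. By finite speed of propagation, information entering at $x=0$ cannot reach $x=1$ before time $1/L$. Fix the step $\tau:=1/(2L)$ and cut $[0,T]$ into slabs $[k\tau,(k+1)\tau]$.

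The first ingredient is the open-loop problem. Take an initial datum $v_0\in L^\infty(0,1)$ and a Dirichlet datum $g\in L^\infty(0,\tau)^n$ at $x=0$; because of \eqref{eq:pos} no condition is imposed at $x=1$. The BLN/Coclite--Karlsen--Kwon theory (Section~\ref{sec:open}) then gives a unique entropy solution in $C([0,\tau];L^1)\cap L^\infty$, with strong traces at $x=0,1$. Since the characteristics enter at $x=0$, the BLN condition reduces to the trace at $0$ equalling $g$. We also get the $L^1$ contraction
\[
\|v(t)-\tilde v(t)\|_{L^1}\le \|v_0-\tilde v_0\|_{L^1}+C\|g-\tilde g\|_{L^1(0,t)},
\]
and a matching estimate for the outgoing traces, $\int_0^t|f(v(s,1))-f(\tilde v(s,1))|\,ds\le\cdots$.

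The second ingredient is the delay lemma of Section~\ref{sec:delay}. On $(0,\tau)$ the outgoing trace $v(\cdot,1)$ does not depend on $g$. To prove it, compare two solutions with the same $v_0$ and different $g$ using Kružkov's doubling with a test function supported in the cone $\{x> Lt\}$. Dissipativity of the entropy flux across the line $x=Lt$ gives $v=\tilde v$ a.e.\ for $x>Lt$, and the traces at $x=1$ then coincide for $t<1/L$.

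With these two facts, construct the solution slab by slab.
\begin{itemize}
\item On $[0,\tau]$, solve the open-loop problem with $v_0=u_0$ and any boundary datum, say $g=0$. Read off $w:=v(\cdot,1)$ on $(0,\tau)$; by the delay lemma it is independent of the choice of $g$.
\item Re-solve with $g=G(w)$. By the lemma again, the new outgoing trace is still $w$, so $u(t,0)=G(u(t,1))$ holds a.e.\ on $(0,\tau)$.
\item Restart from $u(\tau)\in L^\infty$ and repeat. We need $\|u(\tau)\|_\infty\le\max(\|u_0\|_\infty,\|G(w)\|_\infty)$, which is finite because $G$ is Lipschitz. The bounds may grow geometrically with $k$, but there are only finitely many slabs.
\end{itemize}
Glue the slabs together. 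The entropy inequalities localize in time, and the strong traces match at the junctions because $u\in C([0,T];L^1)$, so the glued function is an entropy solution on $(0,T)$ in the sense of Definition~\ref{def:kru}.

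Uniqueness follows the same slab structure. Let $u,\tilde u$ be two solutions with the same datum. On the first slab their outgoing traces agree by the delay lemma, so $G$ gives them the same incoming traces. Open-loop uniqueness then gives $u=\tilde u$ on the slab, and induction covers $[0,T]$.

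Continuity in $L^1$ uses the same induction. Take two data $u_0,\tilde u_0$ in a bounded $L^\infty$ set; continuity is meant on such sets, or one extends by density using the $L^1$ estimates. The outgoing traces on $(0,\tau)$ satisfy
\[
\int_0^\tau|u(s,1)-\tilde u(s,1)|\,ds\le a^{-1}\!\int_0^\tau|f(u(s,1))-f(\tilde u(s,1))|\,ds\le C\|u_0-\tilde u_0\|_{L^1},
\]
where the first inequality uses the monotonicity $f_i'\ge a$. Then
\[
\|G(u(\cdot,1))-G(\tilde u(\cdot,1))\|_{L^1(0,\tau)}\le \mathrm{Lip}(G)\,C\|u_0-\tilde u_0\|_{L^1},
\]
and the open-loop contraction gives $\sup_{[0,\tau]}\|u-\tilde u\|_{L^1}\le C'\|u_0-\tilde u_0\|_{L^1}$. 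Iterating over $\lceil T/\tau\rceil$ slabs gives Lipschitz dependence with constant $C'^{\lceil T/\tau\rceil}$.

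I expect the main obstacle to be the trace estimate: bounding $\int|f(u(s,1))-f(\tilde u(s,1))|$ by initial and boundary differences, and the rigorous delay lemma, both at the level of strong traces of $L^\infty$ entropy solutions. I would take a Kružkov test function $\phi(t,x)$ equal to $1$ in the interior and cut off near $x=1$, pass to the limit using strong traces \cite{KV07,Panov07}, and read the boundary flux term at $x=1$ as an $L^1$ dissipation.
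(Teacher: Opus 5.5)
Your proposal is correct and follows essentially the paper's argument: open-loop well-posedness (Theorem~\ref{thm:open}), the finite-speed delay lemma proved by Kru\v{z}kov doubling on the cone $\{x>Lt\}$, a slab-by-slab construction, and uniqueness by induction on slabs. The one structural difference is that you restart each slab from $u(k\tau)$ and glue, so you need only the $\tilde t=0$ case of Lemma~\ref{lem:outflow_indep} plus the routine restriction/concatenation property of entropy solutions in $C([0,T];L^1)$, whereas the paper always re-solves from $t=0$ with an extended input and invokes the lemma with $\tilde t=t_n$, which avoids gluing.

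Your Lipschitz-dependence argument keeps the outflow term $\int_0^t|f_i(u_i(s,1))-f_i(\tilde u_i(s,1))|\,ds$ in the doubling estimate and converts it with $f_i'\ge a$. It supplies the continuity clause that the paper states but never proves. Its constants depend only on $a$, $\mathrm{Lip}(f)$, $\mathrm{Lip}(G)$ and $T$, so you do not need to restrict to bounded $L^\infty$ sets or argue by density.
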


\begin{remark}
Note that the global Lipschitz assumption on $f$ is necessary in the absence of additional information on $G$. Otherwise, even in the scalar case $n=1$, blow-up may occur. For instance, consider $G(U)=2U$ and $f(U)=1+U^2$ for $U \ge 1$. For an initial value $u_0 \ge 1$, the amplitude of the solution is doubled each time the characteristic curve traverses the domain $[0,1]$, since the boundary condition becomes $u(t,0)=2u(t,1)$. At the same time, the traversal time of [0,1] is halved given that the transport velocity is $f'(u(t,x)) = 2u(t,x)$. Consequently, the solution blows up in finite time.
\end{remark}

The proof of Theorem \ref{thm:main} (Section~\ref{sec:closed}) will use two ingredients:
\begin{itemize}
\item open-loop well-posedness for the initial boundary value problem \eqref{eq:sys} with boundary conditions of the form
\begin{equation}
\label{eq:open-bound}
u(t,0) = w(t),
\end{equation}
where $w\in L^{\infty}(0,T)$ is given (Section~\ref{sec:open}).
\item a finite speed of propagation/delay lemma (Lemma \ref{lem:outflow_indep}) showing that,  because the outgoing trace $u(t,1)$ at time $t$
is not influenced by the most recent values of the incoming signal $u(t,0)$, the system \eqref{eq:sys} of scalar conservation laws can be considered as
 an open-loop system over a sufficiently short time interval.\\
\end{itemize}
Our next contribution is to give conditions on the map $G$ which guarantee the global exponential stability of systems of scalar conservation laws of the form \eqref{eq:sys}--\eqref{eq:feedback} in $L^1$ and $L^{\infty}$ norms, according to the following definition.
\begin{definition}
\label{def:expstab}
The system \eqref{eq:sys}--\eqref{eq:feedback} is said globally exponentially stable in the $L^{1}$-norm (resp. in the $L^{\infty}$-norm), if there exist $C>0$ and $\gamma>0$ such that for any $u_0\in L^\infty(0,1)$ and for any $T>0$, there exists a unique entropy solution $u$ of \eqref{eq:sys}--\eqref{eq:feedback} with initial condition $u_{0}$ in the sense of Definition \ref{def:kru} with
\begin{itemize}
  \item $u\in C([0,T];L^1(0,1))\cap L^\infty((0,T)\times(0,1))$,
  \item $u$ admits strong traces $u(\cdot,0),u(\cdot,1)\in L^\infty(0,T)$ which satisfy the boundary condition \eqref{eq:feedback} a.e. on $(0,T)$,
\end{itemize}
and
\begin{equation}
\| u(t,\cdot)\|_{L^{1}(0,1)}\leq C e^{-\gamma t}\|u_{0}\|_{L^{1}(0,1)}, \;\;\forall t\in[0,+\infty),
\end{equation}
\begin{equation}
\label{exp-decay-infty}
(\text{resp. } \|u(t,\cdot)\|_{L^\infty (0,1)}\leq C e^{-\gamma t}\|u_{0}\|_{L^\infty (0,1)}, \;\;\forall t\in[0,+\infty)).
\end{equation}
\end{definition}
Our main results are the following.
\begin{theorem}[Global exponential stability in the $L^1$-norm]
\label{thm:expstab}
Assume that $f$ is globally Lipschitz and that there exist $\mu>0$ and $p_{i}>0$ such that
\begin{equation}
\label{eq:condstab}
\sum\limits_{i=1}^{n}p_{i}\left(|f_{i}(y_{i})-f_{i}(0)|e^{-\mu}-|f_{i}(G_{i}(y))-f_{i}(0)|\right) \geq 0,\;\;\forall y = (y_{1},...,y_{n})\in \mathbb{R}^{n}.
\end{equation}
Then the system \eqref{eq:sys}--\eqref{eq:feedback} is globally exponentially stable for the $L^{1}$ norm (in the sense of Definition \ref{def:expstab})
\end{theorem}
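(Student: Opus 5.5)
Existence and uniqueness of the entropy solution on every $[0,T]$ come directly from Theorem~\ref{thm:main}, since $f$ is globally Lipschitz. So the whole task is the exponential decay estimate. I will use a weighted Lyapunov functional built from Kru\v{z}kov entropies with constant state $k=0$:
\begin{equation*}
V(t)=\sum_{i=1}^n p_i\int_0^1 |u_i(t,x)|\,e^{-\mu x}\,dx .
\end{equation*}
Because $e^{-\mu}\le e^{-\mu x}\le 1$, $V$ is equivalent to $\|u(t,\cdot)\|_{L^1}$ with constants depending only on $p_i$ and $\mu$. It therefore suffices to prove $V(t)\le e^{-\gamma t}V(0)$ for some $\gamma>0$.

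\textbf{Step 1: formal dissipation identity.} Each component satisfies the Kru\v{z}kov inequality $\partial_t|u_i|+\partial_x q_i(u_i)\le 0$ in the distributional sense in $(0,T)\times(0,1)$. Here $q_i(u)=\sgn(u)(f_i(u)-f_i(0))=|f_i(u)-f_i(0)|$, the last equality holding because $f_i$ is increasing by \eqref{eq:pos}. Testing against $\varphi(t)e^{-\mu x}$ and integrating by parts in $x$ gives, in the sense of distributions in $t$,
\begin{equation*}
\frac{d}{dt}V \le \sum_i p_i\Big(q_i(u_i(t,0)) - e^{-\mu}q_i(u_i(t,1))\Big) - \mu\sum_i p_i\int_0^1 q_i(u_i)e^{-\mu x}dx .
\end{equation*}
Substituting $u(t,0)=G(u(t,1))$ and applying \eqref{eq:condstab} with $y=u(t,1)$ shows that the boundary term is $\le 0$. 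From \eqref{eq:pos} we have $q_i(s)\ge a|s|$, so the interior term is $\le -\mu a V$. This yields $V'\le-\mu a V$, hence $V(t)\le e^{-\mu a t}V(0)$. Finally we extend to all $t\ge 0$ by concatenating the solutions given on arbitrary $[0,T]$, using uniqueness.

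\textbf{Step 2 (main obstacle): justification at the boundary.} The entropy inequality with a test function that does not vanish at $x=0,1$ must be justified using the strong traces. I plan to take $\psi_\varepsilon(x)$, equal to $e^{-\mu x}$ on $[\varepsilon,1-\varepsilon]$ and cut off linearly to zero near the endpoints, and apply the interior Kru\v{z}kov inequality from Definition~\ref{def:kru}. The boundary layer terms are of the form $\frac1\varepsilon\int_0^\varepsilon q_i(u_i(t,x))dx$ and the analogue near $x=1$. By the strong-trace property (as in \cite{KV07,Panov07,CKK09}), these converge in $L^1_{loc}(0,T)$ to $q_i(u_i(t,0))$ and $e^{-\mu}q_i(u_i(t,1))$ respectively, with the correct signs because $q_i$ is continuous. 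This gives the inequality of Step 1 in integrated form: for $0\le s<t$,
\begin{equation*}
V(t)-V(s)\le\int_s^t\Big[\text{boundary terms}-\mu a V\Big]d\tau .
\end{equation*}
We then conclude by Gronwall, using the continuity $u\in C([0,T];L^1)$ to pass to the endpoints $s=0$ and $t$.

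The point needing care is that only the interior entropy inequality is used. Any BLN-type boundary entropy conditions are not needed, since the boundary fluxes are evaluated directly at the traces, which satisfy \eqref{eq:feedback} a.e. If Definition~\ref{def:kru} only gives the Kru\v{z}kov inequality for nonnegative test functions compactly supported in the open set, the cutoff argument above is exactly what is needed. Otherwise, if the definition already includes boundary trace terms, I will use it directly.
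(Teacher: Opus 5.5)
Your proposal is correct and follows essentially the same route as the paper. It uses the same weighted functional $V(t)=\sum_i p_i\int_0^1 e^{-\mu x}|u_i|\,dx$ and the Kru\v{z}kov inequality with $k=0$, tested against a linear boundary cutoff and recovered at the boundary via strong traces; then $u(t,0)=G(u(t,1))$ with \eqref{eq:condstab} removes the boundary term, and $|f_i(s)-f_i(0)|\ge a|s|$ from \eqref{eq:pos} gives $V(t)\le e^{-a\mu t}V(0)$. Your cutoff equals $e^{-\mu x}$ on $[\varepsilon,1-\varepsilon]$ rather than being the product $\chi_\varepsilon(x)e^{-\mu x}$, which is only a cosmetic difference and yields the same limits.
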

The stability condition \eqref{eq:condstab} depends on the $f_{i}$ and is a priori very different from the usual stability conditions in higher norms. For instance in \cite{CoronNguyen2015} the following result is shown for the $W^{2,p}$ norm.
\begin{theorem}[\cite{CoronNguyen2015}]
\label{th:CoronNguyen}
Let $1\leq p\leq +\infty$, the system \eqref{eq:sys}--\eqref{eq:feedback} is locally exponentially stable for the $W^{2,p}$ norm if there exist $\Delta_i>0$, $i\in \{1,\ldots, n\}$, such that, with $\Delta=\mathrm{diag}(\Delta_1,\ldots \Delta_n)\in \mathbb{R}^{n\times n}$,
\begin{equation}
\label{eq:delta0}
|\Delta G'(0)\Delta^{-1}|_{p} <1.
\end{equation}
\end{theorem}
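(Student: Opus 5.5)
Since this result is quoted from \cite{CoronNguyen2015}, I would reprove it by a Lyapunov argument on time derivatives, working with classical solutions near $0$ that satisfy the usual $W^{2,p}$ compatibility conditions at the corner $(t,x)=(0,0)$. For small data, local well-posedness in $W^{2,p}$ is standard, and $W^{2,p}(0,1)\subset C^1([0,1])$. Write $\lambda_i(s)=f_i'(s)$, so $\lambda_i(0)>0$ by \eqref{eq:pos}.

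Set $v=\partial_t u$ and $w=\partial_{tt}u$. They satisfy
\begin{equation*}
v_t+\lambda(u)v_x=-\lambda'(u)u_x v,\qquad w_t+\lambda(u)w_x=(\text{terms quadratic in } (v,w,u_x,v_x)).
\end{equation*}
Differentiating \eqref{eq:feedback} in time gives the boundary conditions
\begin{equation*}
v(t,0)=G'(u(t,1))v(t,1),\qquad w(t,0)=G'(u(t,1))w(t,1)+G''(u(t,1))[v(t,1),v(t,1)].
\end{equation*}
For $1\le p<\infty$ I take
\begin{equation*}
V=\sum_{k=0}^{2}\sum_{i=1}^n \frac{\Delta_i^p}{\lambda_i(0)}\int_0^1 e^{-\mu x/\lambda_i(0)}\,|\partial_t^k u_i|^p\,dx .
\end{equation*}
Because of the $1/\lambda_i(0)$ normalization, the boundary contribution to $\dot V$ at order $k$ is, up to errors of size $O(|u|_{C^0})$ times the boundary values,
\begin{equation*}
\sum_i \Delta_i^p\left(e^{-\mu/\lambda_i(0)}|\partial_t^k u_i(t,1)|^p-|\partial_t^k u_i(t,0)|^p\right).
\end{equation*}

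Substitute the boundary condition at $x=0$. With $z=\Delta\,\partial_t^k u(t,1)$, the incoming term is $|\Delta G'(0)\Delta^{-1}z|_p^p$, up to perturbations. Condition \eqref{eq:delta0} then lets me fix $\mu>0$ small so that $\min_i e^{-\mu/\lambda_i(0)}>|\Delta G'(0)\Delta^{-1}|_p^p$. This makes the boundary term negative by a margin proportional to $|z|_p^p$. The interior term contributes $-\mu V$ up to corrections. The perturbations are of three kinds: the nonlinear source terms, the difference $\lambda_i(u)-\lambda_i(0)$, and $G'(u(t,1))-G'(0)$. All of them are bounded by $C\,\|u\|_{W^{2,p}}\,V$ plus small multiples of the boundary quantities. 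Hence $\dot V\le -(\mu/2)V$ as long as $\|u\|_{W^{2,p}}$ stays small.

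Two equivalences close the argument. First, $V$ controls $\|u(t)\|_{W^{2,p}}^p$: use the PDE to express $u_x=-v/\lambda(u)$ and $u_{xx}$ in terms of $(u,v,w)$. Second, a continuity (bootstrap) argument keeps the solution small for all time, which yields exponential decay. For $p=\infty$ I would either pass to the limit $p\to\infty$, with constants uniform in $p$ because \eqref{eq:delta0} is an open condition and the $\ell^p$ operator norm is continuous in $p$, or argue directly along characteristics with a weighted max functional $\max_i \Delta_i e^{-\mu x}\|\partial_t^k u_i\|_\infty$.

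I expect the main obstacle to be the second-order level. Its source contains $v_x$, which is comparable to $w$, and its boundary condition contains the quadratic term $G''[v,v]$. To absorb these I would use $|v(t,1)|\le C\|u\|_{C^1}$ together with Young's inequality. The boundary inequality needs a small margin, so $\mu$ must be fixed first and the smallness threshold chosen afterwards. The uniformity of the constants in the limit $p\to\infty$ needs similar care.
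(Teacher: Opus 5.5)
The paper does not prove this statement. It quotes it from \cite{CoronNguyen2015}, whose argument, as the introduction recalls, uses characteristics to reduce the closed loop to a nonlinear time-delay system for the boundary values and then analyses that system directly. Your Lyapunov route is genuinely different, and in the present diagonal setting it is correct in outline. It is the $W^{2,p}$ version of the weighted-$L^2$ argument on $(u,u_t,u_{tt})$ used for $H^2$, and the key algebra is right:
\begin{itemize}
\item the incoming boundary term is $|\Delta G'(0)\Delta^{-1}z|_p^p$ with $z=\Delta\,\partial_t^k u(t,1)$;
\item the $G''[v,v]$ term at level two can be absorbed by the level-one boundary margin, because $|v(t,1)|\le C\|u\|_{C^1}$ is small.
\end{itemize}
Mind the sign, though: the boundary contribution to $\dot V$ is minus the expression you display.

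\textbf{Comparison.} Your approach gives an explicit strict Lyapunov functional and an explicit decay rate. For $p=\infty$ it is the same device as the $L^{2m}$, $m\to\infty$ argument of Section~\ref{sec:Linfty}. The delay-system approach instead treats all $p\in[1,\infty]$ at once without a limiting procedure. It is also not tied to the diagonal structure you exploit, and it is the framework in which \cite{CoronNguyen2015} obtains sharpness results.

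\textbf{The case $p=\infty$.} If you pass $p\to\infty$, the weight must scale with the exponent: use $e^{-p\mu_0 x}$, just as the paper uses $e^{-2m\nu x}$. With a fixed small $\mu$, $\dot V\le-(\mu/2)V$ only gives decay of $V^{1/p}$ at rate $\mu/(2p)$, which degenerates as $p\to\infty$. The boundary inequality then holds uniformly for large $p$ because $|M|_p\le n^{1/p}|M|_\infty$.

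\textbf{Technical points to state explicitly.}
\begin{itemize}
\item $\dot V$ must be computed on smoother compatible solutions and transferred to $W^{2,p}$ by density, since $u_{tt}$ is only $L^p$ in $x$.
\item $|\cdot|$ must be regularised when $p=1$.
\item Writing the equation for $u_{tt}$ classically uses $f\in C^3$ and $G\in C^2$, which goes beyond the paper's standing assumptions.
\end{itemize}
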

In Theorem \ref{th:CoronNguyen}, $|\cdot|_{p}$ refers to the classical matrix norm
\begin{equation}
\label{def-norm-matrix-infty}
|M|_{p}:=\max\{|Mz|_{p};\; z\in \mathbb{R}^n \text{ such that } |z|_{p}\leq 1\},\;\forall\;p\in[1,+\infty],
\end{equation}
with, for $z=(z_1,\ldots,z_n)^\top \in \mathbb{R}^n$, $|z|_{p}:=\left(\sum_{i=1}^n |z_i|^p\right)^{1/p}$ if $p\in [1,+\infty)$ and $|z|_{\infty}:=\max\{|z_i|; \, i\in \{1,\ldots, n\}\}$.
In the literature this condition is also written (see for instance \cite{CoronNguyen2015})
\begin{equation}
\rho_{p} := \inf\limits_{\Delta=\mathrm{diag}(\Delta_1,\ldots \Delta_n),\; \Delta_{i}> 0}|\Delta G'(0)\Delta^{-1}|_{p} <1.
\end{equation}
Note that \eqref{eq:delta0} is equivalent to
\begin{equation}
\label{eq:delta01}
|\Delta G'(0)z|_{p} < |\Delta z|_{p},\quad\forall z\in \mathbb{R}^{n}\setminus\{0\}.
\end{equation}
The same conditions are found in \cite[Theorem 1.2]{CoronBV} and \cite[Theorem 3.4]{dus2021} to obtain the stability in the BV-norm. Note that locally around $y_{i}=0$, we recover \eqref{eq:delta01} from \eqref{eq:condstab}. Under some assumptions on the concavity of the $f_{i}$ it is even possible to get a sufficient condition for the global exponential stability in the $L^{1}$-norm that does not depend on the $f_{i}$ as in the following theorem.
\begin{theorem}
\label{thm:fconcave}
Assume that the $f_{i}$ are concave and that there exists $\mu>0$ and $\Delta_i>0$, $i\in \{1,\ldots, n\}$, such that, with $\Delta=\mathrm{diag}(\Delta_1,\ldots \Delta_n)\in \mathbb{R}^{n\times n}$,
\begin{equation}
\label{eq:condstab2}
|\Delta G(z)|_{1} \leq e^{-\mu}|\Delta z|_{1},\quad\forall z\in \mathbb{R}^{n}.
\end{equation}
Then the system \eqref{eq:sys}--\eqref{eq:feedback} is globally exponentially stable for the $L^{1}$ norm (in the sense of Definition \ref{def:expstab}).
\end{theorem}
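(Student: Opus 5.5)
The plan is to deduce Theorem~\ref{thm:fconcave} from the Lyapunov machinery behind Theorem~\ref{thm:expstab}, not to redo the entropy/trace analysis. Without loss of generality I normalize $f_i(0)=0$, since the flux enters only through $f_i(s)-f_i(0)$. By \eqref{eq:pos} each $f_i$ is then strictly increasing, so $\sgn f_i(s)=\sgn s$.

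The first and main step is to convert the $\ell^1$ contraction \eqref{eq:condstab2} into the flux-weighted inequality \eqref{eq:condstab}. The natural choice is $p_i=\Delta_i/\lambda_i$ for suitable constants $\lambda_i>0$, for instance $\lambda_i=f_i'(0)$. One then wants
\begin{equation*}
|f_i(G_i(y))|\le \lambda_i |G_i(y)| \quad\text{and}\quad |f_i(y_i)|\ge \lambda_i|y_i|,
\end{equation*}
which would give \eqref{eq:condstab} directly from \eqref{eq:condstab2}. Concavity yields $f_i(s)\le f_i'(0)s$ for all $s$. This gives the first bound when $G_i(y)\ge 0$ and the second when $y_i\le 0$, but in the opposite sign configurations the inequalities point the wrong way.

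So I would not apply Theorem~\ref{thm:expstab} verbatim. Instead I would return to the entropy Lyapunov computation, choosing a Kru\v{z}kov-type entropy adapted to concavity. Concretely, I would work with
\begin{equation*}
V(t)=\sum_i \Delta_i\int_0^1 e^{-\mu x}\,\eta_i(u_i(t,x))\,dx,
\end{equation*}
where $\eta_i$ is convex and comparable to $|s|$ (using $a\le f_i'$ and local boundedness). I would select $\eta_i$ so that its entropy flux $q_i$ satisfies $q_i(s)=\lambda|s|$ exactly, up to a uniform constant. This amounts to taking $\eta_i' = \lambda\,\sgn(s)/f_i'(s)$, which is convex on each half-line precisely because $f_i'$ is nonincreasing, i.e. because $f_i$ is concave; this is where concavity enters. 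It is the mechanism that removes the $f_i$-dependence from the boundary term, which then reads $\lambda\sum_i\Delta_i(e^{-\mu}|u_i(t,1)|-|G_i(u(t,1))|)$ and is nonnegative by \eqref{eq:condstab2}.

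Two technical points must be checked. First, $\eta_i$ must be genuinely convex across $0$: the jump of $\eta_i'$ at $0$ is $2\lambda/f_i'(0)>0$, so this is fine. Second, one needs $\eta_i\ge c|s|$ and $\eta_i\le C|s|$ on the relevant range. The lower bound holds on the range of the solution because $f_i'$ is bounded above there. The interior term $-\mu e^{-\mu x}q_i$ gives $\dot V\le -\mu\lambda\sum_i\Delta_i\int e^{-\mu x}|u_i|$. This is comparable to $-\gamma V$ only if $1/f_i'$ is bounded above, which holds by \eqref{eq:pos}, and $\eta_i\le |s|/a\cdot\lambda$. Together these give exponential decay of $V$, hence of the $L^1$ norm.

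The expected obstacle is rigor rather than the computation. The entropy inequality with the non-Kru\v{z}kov entropy $\eta_i$ and its boundary terms must be justified using the strong traces, as in the proof of Theorem~\ref{thm:expstab}. Well-posedness is also needed when $f$ is not globally Lipschitz, since a concave $f_i$ may have $f_i'\to+\infty$ at $-\infty$. For the latter I would first use an $L^\infty$ bound, e.g. a maximum principle for each time slab combined with \eqref{eq:condstab2}, to confine the solution to a compact range. I would then modify $f_i$ outside that range to make it globally Lipschitz, preserving concavity and \eqref{eq:pos}, and apply Theorem~\ref{thm:main}.
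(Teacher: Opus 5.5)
Your entropy is the paper's own: its Lyapunov functional uses $|h_i(u_i)|$ with $h_i'=1/f_i'$ and flux $q_i(U)=|U|$, which is your $\eta_i$ with $\lambda=1$. However, the step everything rests on fails: $\eta_i$ is \emph{not} convex on the negative half-line.

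For $s>0$ you have $\eta_i'=\lambda/f_i'$, which is nondecreasing since $f_i'$ is nonincreasing. For $s<0$ you have $\eta_i'=-\lambda/f_i'$, so there $\eta_i''=\lambda f_i''/(f_i')^2\le 0$. Concavity of $f_i$ therefore makes $\eta_i$ concave on $(-\infty,0]$, and strictly so wherever $f_i''<0$. Remark~\ref{rmk:Kruz} is then unavailable. In fact, at an admissible shock joining two negative states, the entropy production of $\eta_i$ is nonnegative, which is the wrong sign.

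The paper's write-up has the same gap. It checks only that $h_i$ is convex, and this does not make $|h_i|=-h_i$ convex on $(-\infty,0)$. Changing the entropy within this scheme does not help either. Controlling the boundary term by \eqref{eq:condstab2} alone essentially forces $q_i\propto|U|$, hence $\eta_i'=\lambda\sgn(U)/f_i'(U)$. The sign asymmetry you found in your first step has not been removed; it has moved from the boundary term into the convexity of the entropy.

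What the computation does prove is the statement under the sign condition $s\,f_i''(s)\le 0$, i.e.\ $f_i$ concave on $[0,\infty)$ and convex on $(-\infty,0]$. Then $\eta_i''=\lambda|f_i''|/(f_i')^2\ge 0$ on both sides, the kink at $0$ is convex (jump $2\lambda/f_i'(0)>0$), and the rest of your argument goes through. It also covers concave $f_i$ for nonnegative solutions, for example $u_0\ge 0$ and $G(\R_+^n)\subset\R_+^n$, with nonnegativity propagated slab by slab by the comparison principle. In that case $\eta_i$ can be replaced by any convex function agreeing with it on $[0,\infty)$. For concave $f_i$ with curvature on $(-\infty,0)$ and a sign-changing $G$, a different argument is needed.

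Separately, your truncation step for removing global Lipschitzness is unsupported. Condition \eqref{eq:condstab2} only gives $|\Delta G(z)|_\infty\le n e^{-\mu}|\Delta z|_\infty$. A slab-wise maximum principle therefore allows amplification by a factor possibly larger than $1$ per traversal, while the slabs shrink as $f_i'$ grows on $(-\infty,0)$; compare the blow-up remark after Theorem~\ref{thm:main}. The paper simply uses global Lipschitzness of $f$ at this point, as in Theorem~\ref{thm:expstab}.
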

Interestingly, this result also holds in the $L^{\infty}$ norm without any assumption on the concavity of the $f_{i}$.

\begin{theorem}[Global exponential stability in the $L^\infty$-norm]
\label{thm:expstab-infty}
Assume that there exists $\mu>0$ and $\Delta_i>0$, $i\in \{1,\ldots, n\}$, such that, with $\Delta=\mathrm{diag}(\Delta_1,\ldots \Delta_n)\in \mathbb{R}^{n\times n}$,
\begin{equation}
\label{Grhoinfty<1}
|\Delta G(z)|_\infty\leq e^{-\mu}|\Delta z|_\infty, \quad \forall z\in \mathbb{R}^n.
\end{equation}
Then the system \eqref{eq:sys}--\eqref{eq:feedback} is globally exponentially stable for the $L^{\infty}$ norm (in the sense of Definition \ref{def:expstab}).
\end{theorem}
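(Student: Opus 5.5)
Since the theorem does not assume $f$ globally Lipschitz, I will first reduce to the globally Lipschitz case by truncation and then prove the decay by a comparison argument on successive traversals of the domain.

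\textbf{Step 1: truncation.} Set $M:=\|\Delta u_0\|_{L^\infty}$, i.e.\ $\max_i \Delta_i\|u_{0,i}\|_{L^\infty}$. For each $i$, replace $f_i$ by a $C^2$ globally Lipschitz $\tilde f_i$ with
\begin{equation*}
\tilde f_i=f_i \ \text{on } [-M/\Delta_i-1,\,M/\Delta_i+1],\qquad \tilde f_i'\ge a \ \text{everywhere}.
\end{equation*}
This is possible by modifying $f_i'$ outside the interval while keeping it bounded below by $a$. Theorem~\ref{thm:main} then gives a unique entropy solution $\tilde u$ for the flux $\tilde f$ on every $(0,T)$.

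\textbf{Step 2: invariant region.} I claim that $|\tilde u_i(t,x)|\le M/\Delta_i$ a.e. The argument is a maximum principle for each scalar law. By Kru\v{z}kov/BLN comparison for the open-loop problem (Section~\ref{sec:open}), the solution stays within the range of its initial and boundary data. Here the boundary datum is $w_i=G_i(\tilde u(t,1))$, and \eqref{Grhoinfty<1} gives
\begin{equation*}
\Delta_i|w_i|\le e^{-\mu}\max_j \Delta_j|\tilde u_j(t,1)|.
\end{equation*}
I will combine this with the method of steps used for Theorem~\ref{thm:main}. On each time slab the outgoing trace is determined by data already bounded by $M$, so the incoming data are bounded by $e^{-\mu}M\le M$, and induction over the slabs closes the bound. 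Consequently $\tilde u$ only takes values where $\tilde f=f$, so $\tilde u$ is an entropy solution of the original system. Uniqueness for the original $f$ follows the same way: any bounded solution $v$ is a solution for a truncation adapted to $\|v\|_{L^\infty}$, and such truncations agree on the relevant range.

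\textbf{Step 3: exponential decay by a delay argument.} Since every characteristic speed is at least $a$, the Lemma~\ref{lem:outflow_indep}/finite-speed principle implies that for $t\ge 1/a$ the solution on $(0,1)$ depends only on boundary data on $[t-1/a,\,t]$. Applying the open-loop maximum principle on $[t-1/a,\,t]$ gives
\begin{equation*}
\sup_{s\ge t}\|\Delta\tilde u(s)\|_{L^\infty}\le \operatorname{ess\,sup}_{s\ge t-1/a}\Delta_i|w_i(s)| \le e^{-\mu}\sup_{s\ge t-1/a}\|\Delta \tilde u(s)\|_{L^\infty},
\end{equation*}
where the traces $\tilde u(s,1)$ are bounded by the interior sup through the strong-trace property.

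Iterating this over intervals of length $1/a$ yields the bound with
\begin{equation*}
\gamma=\mu a,\qquad C=e^{\mu}\,\frac{\max_i\Delta_i}{\min_i\Delta_i}.
\end{equation*}

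\textbf{Main obstacle.} The delicate step is making Step 3 rigorous. The maximum principle must use only boundary data on a recent time window, which needs an open-loop "domain of dependence" estimate in the entropy/BLN framework. I would first try an $L^1$-contraction comparison (from Section~\ref{sec:open}) between $\tilde u$ and the solution with the same boundary data but the initial datum at time $t-1/a$ replaced by the constant $\pm M_k/\Delta_i$. Because the speed is at least $a$, the initial datum at time $t-1/a$ has exited the domain by time $t$. The same comparison also controls the traces, which is what the iteration requires.
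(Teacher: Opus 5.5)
Your proposal is correct, and it takes a genuinely different route from the paper.

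\textbf{The paper's route.} The paper uses no traversal-time argument. For $\nu<\mu$ it approximates the weighted sup-norm $\max_i \Delta_i\|e^{-\nu x}u_i(t,\cdot)\|_{L^\infty}$ by the weighted $L^{2m}$ functionals $V_m$. It then applies the entropy inequality with $\eta(z)=z^{2m}$. The boundary term is shown to be nonnegative for large $m$ by combining \eqref{Grhoinfty<1} with the uniform convergence of $\ell^{2m}$-norms to the $\ell^\infty$-norm; the gap $\nu<\mu$ absorbs the factors $a^{1/(2m)}$ and $\bar{\mathbf f}^{1/(2m)}$. The proof concludes with a Dini-derivative lemma and the limit $m\to\infty$. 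The non-Lipschitz case is handled afterwards, by truncating $f$ at a level fixed by the resulting $L^\infty$ bound.

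\textbf{Your route.} You combine the open-loop maximum principle with a lower-speed domain-of-dependence property. This makes the mechanism transparent: one factor $e^{-\mu}$ is gained per traversal time $1/a$. It gives the rate $\gamma=\mu a$ with the explicit constant $e^{\mu}\max_i\Delta_i/\min_i\Delta_i$ directly, rather than every rate $\nu a$ with $\nu<\mu$. Your Step 2 also yields the invariant region $|u_i|\le M/\Delta_i$, and uniqueness for non-Lipschitz $f$, before any decay estimate is used. You also correctly keep $\tilde f_i'\ge a$ in the truncation, which the paper leaves implicit.

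\textbf{What your route requires.} The price is a lemma the paper does not contain. The global estimate \eqref{eq:L1stab} of Section~\ref{sec:open} does not give it. You need a localized Kru\v{z}kov estimate on the moving region $\{0<x<a(s-t_0)\}$, which is the mirror image of the proof of Lemma~\ref{lem:outflow_indep}. On that region:
\begin{itemize}
\item the outgoing relative flux through the edge $x=a(s-t_0)$ is $\sgn(u-v)(f(u)-f(v))-a|u-v|\ge 0$;
\item the flux at $x=0$ vanishes because the two inputs coincide.
\end{itemize}
Hence $\int_0^{a(s-t_0)}|u-v|\,dx$ is non-increasing and vanishes at $s=t_0$. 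With this lemma, the initial datum at $t_0$ is irrelevant, so you can take $0$ instead of $\pm M_k/\Delta_i$. The maximum principle then bounds the comparison solution by $\|w_i\|_{L^\infty(t_0,t_0+1/a)}$, and your iteration goes through.

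\textbf{Trade-off.} The paper's Lyapunov approach avoids this lemma and runs in parallel with the $L^1$ proof. It pays for this with the $m\to\infty$ approximation machinery and the loss $\nu<\mu$ in the rate.
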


\begin{remark}[Linear feedback]
If the feedback law is linear, i.e. if $G(z)=Kz$ for some matrix $K\in \R^{n\times n}$, then \eqref{Grhoinfty<1} is equivalent to
\begin{equation}
\label{eq:feedlin}
\rho_\infty(K)<1,
\end{equation}
where
\begin{equation}
\label{rhoinftyK}
\rho_\infty(K):=\inf\{|\Delta K \Delta^{-1}|_\infty;\; \Delta \in \mathcal{D}^n \}.
\end{equation}
In \eqref{rhoinftyK}, $\mathcal{D}^n$ denotes the set of diagonal $n \times n$ matrices with strictly positive diagonal entries.
Note that it is proved in \cite{1984-Greenberg-Li-JDE}, \cite{1985-Qin-CAM}, \cite{1986-Zhao-thesis}, \cite[Theorem 1.3]{1994-Li-book}, \cite{2015-Bastin-Coron-SICON}, and \cite[Section 4.1]{2016-Bastin-Coron-book} that $\rho_\infty(G'(0))<1$ is a sufficient condition for the local exponential stability of the system \eqref{eq:sys}--\eqref{eq:feedback} in the $C^k$-norm with $k\geq 1$. It is also proved in \cite{dus2021} that the condition \eqref{eq:feedlin} ensures a semi-global exponential stability in the BV norm.
\end{remark}

\begin{remark}
Note that, in contrast with the well-posedness result of Theorem \ref{thm:main}, the global Lipschitzness of $f$ is not needed for the exponential stability results of Theorem~\ref{thm:expstab-infty}. This is allowed thanks to the assumptions \eqref{eq:condstab} and \eqref{Grhoinfty<1} on $G$, which imply that the system \eqref{eq:sys}--\eqref{eq:feedback} is well-posed for all time $T>0$.
\end{remark}

In many physical cases, the condition \eqref{eq:pos} is too strong (one can think of Burgers, LWR or Buckley-Leverett equations, for instance). Let us however remark that this assumption can be removed if we look at the local exponential stability. Indeed a direct consequence of Theorem \ref{thm:expstab-infty} is the following Corollary.

\begin{corollary}[Local stability in $L^{\infty}$]
\label{cor:linear}
Assume that $f$ satisfies
\begin{equation}
f_{i}'(0)>0, \;\forall i\in\{1,...,n\},
\end{equation}
instead of \eqref{eq:pos} and that
\begin{equation}
\label{eq:feedlinG}
\rho_\infty(G'(0))<1,
\end{equation}
then the system \eqref{eq:sys}--\eqref{eq:feedback} is (locally) exponentially stable for the $L^{\infty}$ norm. Moreover, the exponential estimate holds for any decay rate $\mu$ satisfying
\begin{equation}
\mu<\min\limits_{i\in\{1,..,n\}} f_{i}'(0)\left[-\ln(\rho_{\infty}(G'(0)))\right].
\end{equation}
\end{corollary}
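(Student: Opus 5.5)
The plan is to reduce Corollary~\ref{cor:linear} to Theorem~\ref{thm:expstab-infty} by modifying $f$ and $G$ away from the origin. The resulting modified system satisfies all the global hypotheses. Solutions with small initial data never leave the region where the modification is inactive, so they are also the entropy solutions of the original system.

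\emph{Choice of weights and rates.} Since $\rho_\infty(G'(0))<1$, fix $\Delta\in\mathcal{D}^n$ and $\rho$ with $|\Delta G'(0)\Delta^{-1}|_\infty<\rho<1$, with $\rho$ chosen arbitrarily close to $\rho_\infty(G'(0))$. By differentiability of $G$ at $0$ and $G(0)=0$, there is $\delta>0$ such that
\begin{equation*}
|\Delta G(z)|_\infty\le\rho|\Delta z|_\infty \quad\text{whenever } |\Delta z|_\infty\le\delta .
\end{equation*}
By continuity of $f_i'$, we can also pick $\delta$ so small that $f_i'(s)\ge a_\varepsilon:=\min_i f_i'(0)-\varepsilon>0$ for $|s|\le \delta/\min_j\Delta_j$.

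\emph{Modification.} Let $\pi$ be the radial retraction of $\mathbb{R}^n$ onto the ball $\{|\Delta z|_\infty\le\delta\}$ in the weighted $\ell^\infty$ norm. It is Lipschitz and satisfies $|\Delta\pi(z)|_\infty=\min(|\Delta z|_\infty,\delta)\le|\Delta z|_\infty$. Set $\tilde G=G\circ\pi$. Then $\tilde G$ is globally Lipschitz, $\tilde G(0)=0$, and
\begin{equation*}
|\Delta\tilde G(z)|_\infty\le\rho|\Delta\pi(z)|_\infty\le\rho|\Delta z|_\infty \quad\text{for all } z .
\end{equation*}
This is \eqref{Grhoinfty<1} with $e^{-\mu_0}=\rho$. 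Likewise, replace each $f_i$ outside $[-\delta',\delta']$, where $\delta'=\delta/\min_j\Delta_j$, by a $C^2$ extension $\tilde f_i$ with $\tilde f_i'\ge a_\varepsilon$ and $\tilde f_i'$ bounded. This gives \eqref{eq:pos} and global Lipschitz continuity.

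\emph{Applying the theorem.} Theorem~\ref{thm:expstab-infty} then gives, for the modified system, existence and uniqueness together with the estimate $\|u(t)\|_\infty\le Ce^{-\gamma t}\|u_0\|_\infty$. Here I would need to read off from its proof, presumably via the weighted $L^\infty$ functional $\max_i \Delta_i|u_i|$, that the weighted sup norm is nonincreasing. This is plausible: the comparison principle with constant sub/supersolutions in the open-loop problem, combined with the boundary contraction, gives $\sup_x\Delta_i|u_i(t,x)|\le\max(\sup\Delta_i|u_{0,i}|,\ \sup_s|\Delta u(s,0)|_\infty)$, and the incoming trace is bounded by $\rho$ times an earlier outgoing trace. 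Hence if $\|\Delta u_0\|_\infty\le\delta$, the solution stays in $\{|\Delta u|_\infty\le\delta\}$ for all time. There $\tilde f=f$ and $\tilde G=G$, so $u$ is an entropy solution of the original system. Uniqueness for small data follows by the same argument, since any small solution solves the modified problem.

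\emph{Decay rate.} I would either track the rate in the proof of Theorem~\ref{thm:expstab-infty} or argue directly. Characteristics cross $[0,1]$ in time at most $1/a_\varepsilon$. Each crossing multiplies the weighted sup norm by at most $\rho$. After the first crossing the initial data is forgotten, so
\begin{equation*}
\|\Delta u(t)\|_\infty\le\rho^{\lfloor a_\varepsilon t\rfloor}\|\Delta u_0\|_\infty ,
\end{equation*}
which gives the rate $a_\varepsilon(-\ln\rho)$. Letting $\varepsilon\to0$ and $\rho\to\rho_\infty(G'(0))$ yields any $\mu<\min_i f_i'(0)\left[-\ln\rho_\infty(G'(0))\right]$.

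The main obstacle is the decay rate: to make the per-crossing argument rigorous for entropy solutions, I would use the delay lemma (Lemma~\ref{lem:outflow_indep}) together with the open-loop maximum principle.
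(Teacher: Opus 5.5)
Your proposal is correct and follows the route the paper intends when it calls the corollary a direct consequence of Theorem~\ref{thm:expstab-infty} (the paper writes no separate proof): localize $f$ and $G$ near the origin, as the paper itself does with $\tilde f_i$ at the end of Section~\ref{sec:Linfty}; apply the theorem to the modified system; show the solution never leaves the region where the modification is inactive; and read off the rate $\nu a_\varepsilon$, $\nu<-\ln\rho$, from the estimate $V(t)\le e^{-\nu a t}V(0)$ obtained in its proof. Your invariance argument is sound: with $M$ the weighted sup of $u$ over the cylinder and $M_0$ that of $u_0$, the componentwise maximum principle plus the boundary contraction give $M\le\max(M_0,\rho M)$, hence $M\le M_0$, which even avoids the factor $e^{\nu}$ lost through the $e^{-\nu x}$ weight in the paper's $V$; your optional per-crossing argument would instead need a minimum-speed domain-of-dependence estimate rather than Lemma~\ref{lem:outflow_indep}, which controls the opposite direction, but tracking the rate in the theorem's proof already makes it unnecessary.
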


\section{Entropy solutions and boundary traces}\label{sec:entropy}

We recall the definition of an entropy solution.

\begin{definition}[Kru\v{z}kov entropy inequalities in the interior \cite{Kruzhkov1970}]\label{def:kru}
A function $u\in L^\infty((0,T)\times(0,1))$ is an \emph{entropy solution of \eqref{eq:sys} in the interior} if for every $k\in\R$ and every nonnegative
$\varphi\in C_c^1([0,T)\times(0,1))$,
\begin{multline}\label{eq:kru}
  \int_{0}^{T}\int_{0}^{1} \Bigl(|u_{i}-k|\varphi_t + \sgn(u_{i}-k)\bigl(f_{i}(u_{i})-f_{i}(k)\bigr)\varphi_x\Bigr)\,dx\,dt \\
  + \int_0^1 |u_{0,i}(x)-k|\varphi(0,x)\,dx \ge 0,\;\;\forall\; i\in\{1,...,n\}.
\end{multline}
\end{definition}
\begin{remark}
\label{rmk:Kruz}
If $u\in L^\infty((0,T)\times(0,1))$ is an entropy solution of \eqref{eq:sys} in the interior, then one has for any convex $\eta$ (see \cite{Kruzhkov1970})
of class $C^1$,
\begin{equation}\label{eq:kru-convex}
  \int_{0}^{T}\int_{0}^{1} \Bigl(\eta(u_i)\varphi_t + q_i(u_i) \varphi_x\Bigr)\,dx\,dt +\int_{0}^{1}\eta(u_{0,i})\varphi(0,x)dx  \ge 0,\;\;\forall\; i\in\{1,...,n\},
\end{equation}
 for every function $\varphi\in C^1_c([0,T)\times (0,1);[0,+\infty))$ provided that
\begin{equation}
\label{def-flux}
q_i'(z)=\eta'(z)f'_i(z), \quad \forall z\in \mathbb{R}.
\end{equation}
The function $q_i$ is called the entropy flux associated to the entropy $\eta$.
\end{remark}

The boundary traces of an entropy solution are defined in the following proposition.

\begin{proposition}[Strong traces]\label{prop:traces}
Let $u\in L^\infty((0,T)\times(0,1))$ be an entropy solution in the sense of Definition~\ref{def:kru}.
Then there exist (strong) boundary traces $u(\cdot,0),u(\cdot,1)\in L^\infty(0,T)$ such that (up to redefining $u$ on a null set),
\begin{equation}
\label{eq:convbords}
  \lim_{\varepsilon \rightarrow 0,\;\varepsilon >0}\int_0^T |u(t,\varepsilon)-u(t,0)|\,dt = 0,
  \qquad
  \lim_{\varepsilon\rightarrow 0,\; \varepsilon >0}\int_0^T |u(t,1-\varepsilon)-u(t,1)|\,dt = 0.
\end{equation}
\end{proposition}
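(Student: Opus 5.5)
The plan is to reduce Proposition~\ref{prop:traces} to the general strong trace theorems of Vasseur/Kwon--Vasseur and Panov \cite{KV07,Panov07}, applied componentwise. Since the flux is diagonal, each $u_i\in L^\infty((0,T)\times(0,1))$ is an entropy solution of a scalar conservation law $\partial_t u_i+\partial_x f_i(u_i)=0$ in the interior, in the Kru\v{z}kov sense of Definition~\ref{def:kru}. Consequently, for every $k$, the entropy production $\partial_t|u_i-k|+\partial_x\big(\sgn(u_i-k)(f_i(u_i)-f_i(k))\big)$ is a nonpositive distribution, hence a locally finite Radon measure on $(0,T)\times(0,1)$. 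This is exactly the hypothesis needed in \cite{KV07,Panov07}. The other hypothesis is a nondegeneracy condition on the flux, namely that $f_i$ is not affine on any nontrivial interval. In our setting the nondegeneracy required for traces on the boundaries $x=0$ and $x=1$ is with respect to the normal direction $(0,\pm1)$. There the relevant condition is that $\xi\mapsto f_i(\xi)$ is not constant on intervals, and by \eqref{eq:pos} this holds trivially because $f_i'\ge a>0$. I would therefore invoke the version of the trace theorem in which nondegeneracy is only needed in the normal direction to the boundary, as in \cite{Panov07} and \cite[Section~2]{CKK09}.

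The key steps, in order, are as follows. First, fix $i$ and verify the measure-valued entropy production property from \eqref{eq:kru} using test functions compactly supported in the open set. Second, apply the trace theorem to obtain $u_i^{\tau,0}\in L^\infty(0,T)$ with $\operatorname{ess\,lim}_{\varepsilon\to0^+}\int_0^T|u_i(t,\varepsilon)-u_i^{\tau,0}(t)|\,dt=0$, and similarly at $x=1$. The essential limit is taken along a full-measure set of $\varepsilon$. Third, redefine $u$ on a null set of $(0,T)\times(0,1)$ so that the limit holds for all $\varepsilon\to0^+$, which is possible by choosing a representative of $u$ with $\varepsilon\mapsto u(\cdot,\varepsilon)$ continuous into $L^1(0,T)$ near the boundary. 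Fourth, assemble the components into $u(\cdot,0),u(\cdot,1)$. The $L^\infty$ bounds follow since the traces are $L^1_{loc}$ limits of functions bounded by $\|u\|_{L^\infty}$.

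An alternative, more hands-on route exploits \eqref{eq:pos}: since $f_i'\ge a>0$, the roles of $t$ and $x$ can be exchanged. Setting $v_i=f_i(u_i)$, one has $\partial_x v_i+\partial_t f_i^{-1}(v_i)=0$, with $f_i$ strictly increasing. This makes $x$ a time-like variable and reduces the claim to the existence of an $L^1$-continuous representative in the time-like variable. For scalar entropy solutions with $L^\infty$ data, that result is classical; the entropy inequalities transform correctly because $\sgn(u_i-k)=\sgn(v_i-f_i(k))$.

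I expect the main obstacle to be Step three, together with this transformation. One must check that continuity in the new time variable $x$ holds up to $x=0$ and $x=1$, even though the problem is posed on the bounded time interval $(0,T)$ with no boundary condition prescribed in the new "space" variable $t$. For that reason I would primarily rely on \cite{KV07,Panov07}, where this issue is already handled, and use the transformation only as a heuristic check.
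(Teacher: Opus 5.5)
Your proposal is correct and takes essentially the same approach as the paper, which gives no argument beyond invoking the strong-trace theorems of \cite{KV07,Panov07,CKK09}. Your componentwise reduction, the observation that the Kru\v{z}kov entropy production is a nonpositive (hence Radon) measure, and the remark that only nondegeneracy of $f_i$ in the normal direction is needed (guaranteed by $f_i'\ge a>0$, even when $f_i$ is affine) are exactly the hypotheses those results require. One simplification in your Step three: passing from the essential limit to a genuine limit needs no continuous-in-$x$ representative, since you can simply redefine the slices $u(\cdot,\varepsilon)$ for $\varepsilon$ in the exceptional null set (for instance as the trace itself), which changes $u$ only on a null subset of $(0,T)\times(0,1)$.
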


\begin{remark}
The existence of such strong traces for conservation laws goes back to the works of Vasseur, Kwon and Panov and are also proved in the setting of bounded domains with Dirichlet conditions by Coclite, Karlsen and Kwon, see \cite{KV07,Panov07,CKK09}.
\end{remark}

\section{Open-loop well-posedness}\label{sec:open}

In order to prove Theorem \ref{thm:main} we analyse the following \emph{open-loop} system with a prescribed boundary datum $w$:
\begin{equation}
\label{eq:open}
\begin{cases}
u_t+(f(u))_x=0 & \text{in } (0,T)\times(0,1),\\
u(0,x)=u_0(x) & \text{for a.e. }x\in(0,1),\\
u(t,0)=w(t) & \text{for a.e. }t\in(0,T).
\end{cases}
\end{equation}
Note that imposing the boundary condition $u(t,0)=w(t)$ a priori makes sense under the positivity condition \eqref{eq:pos} on the direction of the characteristics. See Bardos--Le Roux--N\'ed\'elec (BLN) \cite{BLN79} and modern treatments such as \cite{CKK09,AS15} in more general cases.
\begin{theorem}[Open-loop well-posedness]\label{thm:open}
Assume $u_0\in L^\infty(0,1)$, $w\in L^\infty(0,T)$ and $f\in C^1(\R^{n})$.
Then there exists a unique entropy solution $u$ to \eqref{eq:open} such that
$u\in C([0,T];L^1(0,1))\cap L^\infty((0,T)\times(0,1))$ and $u$ admits strong boundary traces $u(t,0) = w(t)$ and $u(t,1)$.
Moreover, for two data $(u_0,w)$ and $(v_0,z)$ with corresponding solutions $u,v$ one has the $L^1$ stability estimate
\begin{multline}\label{eq:L1stab}
  \|u_{i}(t,\cdot)-v_{i}(t,\cdot)\|_{L^1(0,1)}
  \le \|u_{0,i}-v_{0,i}\|_{L^1(0,1)} + \int_0^t |f(w_{i}(s))-f(z_{i}(s))|\,ds,
\\ \forall t\in[0,T],\;\forall\;i\in\{1,...,n\}.
\end{multline}
\end{theorem}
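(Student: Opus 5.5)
Since the flux is diagonal, the open-loop problem \eqref{eq:open} splits into $n$ independent scalar initial--boundary value problems
\[
\partial_t u_i+\partial_x f_i(u_i)=0,\qquad u_i(0,\cdot)=u_{0,i},\qquad u_i(\cdot,0)=w_i .
\]
I will therefore argue componentwise and reduce to the scalar theory of Bardos--Le Roux--N\'ed\'elec as developed in \cite{BLN79,CKK09}. For each $i$, I use the BLN entropy formulation on $(0,T)\times(0,1)$. This consists of Kru\v{z}kov inequalities with test functions supported up to the boundary, plus the boundary entropy condition: for all $k$,
\[
\sgn(u_i(t,0)-w_i)\bigl(f_i(u_i(t,0))-f_i(k)\bigr)\le 0 \quad\text{for } k\in I[u_i(t,0),w_i],
\]
and the analogous condition at $x=1$ with free (outflow) data. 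Existence of a unique such solution in $C([0,T];L^1)\cap L^\infty$ is then the content of \cite{BLN79} (vanishing viscosity, BV data) extended to $L^\infty$ data by \cite{CKK09} (or \cite{AS15}). There is one subtlety: these references want $f$ Lipschitz on the range of the solution. This holds automatically since $u_i$ stays in $[\min(\operatorname{ess\,inf} u_{0,i},\operatorname{ess\,inf} w_i),\max(\cdots)]$ by the maximum principle. Strong traces at $x=0,1$ follow from Proposition~\ref{prop:traces}.

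The key point specific to our setting is to upgrade the BLN boundary condition to the genuine Dirichlet condition $u_i(t,0)=w_i(t)$ using \eqref{eq:pos}. Since $f_i'\ge a>0$, $f_i$ is strictly increasing. Suppose $\gamma:=u_i(t,0)\ne w_i(t)$. Take $k=w_i(t)$ in the BLN condition. Then $\sgn(\gamma-w_i)(f_i(\gamma)-f_i(w_i))=|f_i(\gamma)-f_i(w_i)|\ge a|\gamma-w_i|>0$, which contradicts the inequality. Hence the trace equals $w_i$ a.e. Similarly, at $x=1$ the outflow condition is automatically satisfied, because every characteristic exits there. So no constraint is imposed, and $u(\cdot,1)$ is simply the strong trace.

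For the stability estimate \eqref{eq:L1stab}, I use Kru\v{z}kov doubling of variables for two solutions $u_i,v_i$. This yields, in $\mathcal{D}'$ up to the boundary,
\[
\partial_t|u_i-v_i|+\partial_x\bigl(\sgn(u_i-v_i)(f_i(u_i)-f_i(v_i))\bigr)\le 0 .
\]
Testing against $\chi_{[0,t]}(s)\,\psi_\varepsilon(x)$, with $\psi_\varepsilon$ a cutoff equal to $1$ on $[\varepsilon,1-\varepsilon]$, and letting $\varepsilon\to0$ via the strong traces \eqref{eq:convbords}, gives
\[
\|u_i(t)-v_i(t)\|_{L^1}\le\|u_{0,i}-v_{0,i}\|_{L^1}+\int_0^t\Big[Q(s,0)-Q(s,1)\Big]ds,
\]
where $Q=\sgn(u_i-v_i)(f_i(u_i)-f_i(v_i))=|f_i(u_i)-f_i(v_i)|$ by monotonicity. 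The outflow term $-Q(s,1)\le 0$ is dropped, and at $x=0$ the traces are $w_i,z_i$, so $Q(s,0)=|f_i(w_i)-f_i(z_i)|$. This yields \eqref{eq:L1stab}; the $f$ there should be read as $f_i$.

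The main obstacle is justifying the doubling-of-variables computation up to the boundary with only $L^\infty$ data. The Kru\v{z}kov inequalities in Definition~\ref{def:kru} use test functions compactly supported in $(0,1)$. I would first derive the interior inequality for $|u_i-v_i|$ with such test functions, and only then pass to the limit in the cutoff $\psi_\varepsilon$. The strong $L^1$ convergence of traces makes the boundary fluxes converge, so no BV bound is needed. Continuity in time into $L^1$ (from \cite{CKK09}) gives the estimate for every $t$, not just a.e. $t$.
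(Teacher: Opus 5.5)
Your proposal is correct and follows essentially the same route as the paper. Existence and uniqueness are taken from \cite{CKK09} after reducing to scalar equations. The estimate \eqref{eq:L1stab} then comes from Kru\v{z}kov's doubling of variables tested against $\phi(t)\chi_\varepsilon(x)$, letting $\varepsilon\to0$ through the strong traces, and using the monotonicity of $f_i$ to drop the outflow term and identify the inflow term with $|f_i(w_i)-f_i(z_i)|$. The paper leaves one detail implicit that you check explicitly: for strictly increasing $f_i$, the BLN condition at $x=0$ forces the genuine Dirichlet condition $u_i(t,0)=w_i(t)$ and is void at $x=1$. Your verification of this is correct.
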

This theorem is essentially a consequence of \cite[Theorem 1.1--1.2]{CKK09} and its proof is given in Appendix~\ref{app:wellposedopen}.

\begin{proposition}[Maximum principle]
\label{prop:mp}
Under the assumptions of Theorem~\ref{thm:open},
\begin{equation}
\label{eq:propmp}
  \|u\|_{L^\infty((0,T)\times(0,1))} \le \max\Bigl\{\|u_0\|_{L^\infty(0,1)},\|w\|_{L^\infty(0,T)}\Bigr\}.
\end{equation}
\end{proposition}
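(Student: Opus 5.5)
The estimate decouples componentwise, since the flux is diagonal and the boundary datum $w$ is prescribed. It therefore suffices to prove, for each fixed $i$ and with $M:=\max\{\|u_0\|_{L^\infty},\|w\|_{L^\infty}\}$, that $u_i\le M$ a.e. The lower bound $u_i\ge -M$ then follows in the same way, or by applying the upper bound to $-u_i$ with flux $-f_i(-\cdot)$. The slickest route is a comparison with constant solutions through the $L^1$ stability estimate \eqref{eq:L1stab}, but that only bounds $|u-v|$ and not its positive part. I will therefore argue directly with a one-sided Kru\v{z}kov entropy and the strong traces of Proposition~\ref{prop:traces}.

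\textbf{Step 1 (semi-Kru\v{z}kov entropy).} Taking the Kru\v{z}kov inequality \eqref{eq:kru} with $k=M$, and averaging it with the weak form of the equation (which is the case $k$ large in \eqref{eq:kru}), gives the inequality for $\eta(u)=(u_i-M)^+$ with flux $q(u)=\mathbf{1}_{u_i>M}\,(f_i(u_i)-f_i(M))$. Alternatively, I will invoke Remark~\ref{rmk:Kruz} with a $C^1$ convex regularization of $(\cdot-M)^+$ and pass to the limit.

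\textbf{Step 2 (test functions reaching the boundary).} I will use $\varphi(t,x)=\theta(t)\chi_\varepsilon(x)$, where $\chi_\varepsilon$ is a cutoff equal to $1$ on $[2\varepsilon,1-2\varepsilon]$ and linear on the boundary layers, and $\theta$ approximates $\mathbf{1}_{[0,\tau]}$. Since $u_0\le M$, the initial term vanishes. Letting $\varepsilon\to0$ and using the strong traces \eqref{eq:convbords}, I expect to obtain
\begin{equation*}
\int_0^1 (u_i(\tau,x)-M)^+\,dx \le \int_0^\tau q(u_i(t,0))\,dt-\int_0^\tau q(u_i(t,1))\,dt .
\end{equation*}
Here $C([0,T];L^1)$ is used to evaluate the left-hand side at every $\tau$.

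\textbf{Step 3 (signs of the boundary terms).} At $x=0$ the trace is $w_i(t)\le M$, so $q(u_i(t,0))=0$. At $x=1$, $f_i$ is nondecreasing by \eqref{eq:pos} (at minimum assuming $f_i'\ge0$, which is the setting of this section), so $q(u_i(t,1))\ge0$. Hence $\int_0^1(u_i(\tau)-M)^+\,dx\le0$ for every $\tau$, that is, $u_i\le M$ a.e. Taking the maximum over $i$ and combining with the symmetric lower bound yields \eqref{eq:propmp}.

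The main obstacle is Step 2. Definition~\ref{def:kru} only allows test functions compactly supported in $(0,1)$, so the boundary fluxes must be recovered from the strong traces. This requires the flux term $\int\!\!\int q\,\partial_x\chi_\varepsilon$ to converge to the trace integrals, which holds because $q$ is Lipschitz on the bounded range of $u$ and $u(\cdot,\varepsilon)\to u(\cdot,0)$ in $L^1(0,T)$. One must also check that the trace at $x=0$ is indeed $w$ in the strong sense, as provided by Theorem~\ref{thm:open}, rather than only in the weaker BLN sense.
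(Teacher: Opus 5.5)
Your proposal is correct and follows essentially the paper's argument: the one-sided entropy $(u-k)_+$ with flux $(f(u)-f(k))_+$, the boundary-layer cutoff combined with the strong traces, an inflow term that vanishes because the trace at $x=0$ is $w\le M$, and an outflow term that can be dropped because $f$ is increasing. The differences are minor: you take $k=M$ directly, whereas the paper argues by contradiction with $k$ slightly above $M$; and you treat the lower bound explicitly, which the paper leaves implicit.
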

This is shown in Appendix \ref{app:wellposedopen}.

\section{A delay lemma (finite speed of propagation)}\label{sec:delay}

Assumption~\eqref{eq:pos} implies that information travels to the right with a finite speed.
Let
\begin{equation}
\bar{a}(w) = \max\{ f'(v)\;|\; |v|\leq \|u_{0}\|_{L^\infty (0,1)}+\|w\|_{L^\infty (0,1)}\},
\end{equation}

\begin{equation}\label{eq:delta}
  \delta(w) \coloneqq \frac{1}{\bar{a}(w)}.
\end{equation}

\begin{lemma}[Output independent of input for short times]\label{lem:outflow_indep}
Let $u^w,u^z$ be the open-loop entropy solutions of \eqref{eq:open} with the \emph{same} initial datum $u_0\in L^\infty(0,1)$ and two (possibly different) input boundary data
$w,z\in L^\infty(0,T)$.
Then their outgoing traces coincide on $(0,\bar{\delta})$ with $\bar{\delta} = \min(\delta(w),\delta(z))$:
\begin{equation}
  u^w(t,1) = u^z(t,1)
  \qquad\text{for a.e. } t\in(0,\bar{\delta}).
\end{equation}
More generally, if $w=z$ a.e.\ on $(0,\tilde{t})$ for some $\tilde{t}\in(0,T)$, then $u^w(t,1) = u^z(t,1)$ for  a.e. $t$ in $(0,\tilde{t}+\bar{\delta})\cap(0,T)$.
\end{lemma}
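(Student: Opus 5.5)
Since the equations are decoupled in the interior, we work componentwise and drop the index $i$. Put $M:=\|u_0\|_{L^\infty}+\max(\|w\|_{L^\infty},\|z\|_{L^\infty})$. By Proposition~\ref{prop:mp}, both $u^w$ and $u^z$ take values in $[-M,M]$. On that range all characteristic speeds satisfy $f'\le \bar a:=\max(\bar a(w),\bar a(z))=1/\bar\delta$. The tool will be a localized Kružkov $L^1$-contraction (doubling of variables) on trapezoidal domains bounded by lines of slope $\bar a$, i.e.\ a finite-speed-of-propagation estimate in the style of Kružkov's original uniqueness proof.

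\textbf{Step 1 (Kato inequality).} Both $u^w$ and $u^z$ are entropy solutions with the same initial datum and with strong traces (Theorem~\ref{thm:open}, Proposition~\ref{prop:traces}). Doubling variables in \eqref{eq:kru} gives, in $\mathcal D'((0,T)\times(0,1))$,
\begin{equation*}
\partial_t|u^w-u^z|+\partial_x\bigl(\sgn(u^w-u^z)(f(u^w)-f(u^z))\bigr)\le 0 .
\end{equation*}
This holds for nonnegative test functions compactly supported in $[0,T)\times(0,1)$, with initial term $|u_0-u_0|=0$. To reach the boundary we will approximate the indicator of a region by test functions, and pass to the limit in the boundary integrals using the strong traces \eqref{eq:convbords}.

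\textbf{Step 2 (cone estimate).} For $0<\tau<\bar\delta$, consider the region
\begin{equation*}
D_\tau=\{(t,x):\ 0<t<\tau,\ \bar a t<x<1\}.
\end{equation*}
Integrate the Kato inequality against $\varphi=\chi_\varepsilon(t)\,\psi_\varepsilon(x-\bar a t)$, where $\chi_\varepsilon$ and $\psi_\varepsilon$ are smooth nonincreasing/nondecreasing cutoffs of $[0,\tau)$ and $(0,+\infty)$, with the right edge treated through the trace at $x=1$. The contribution of the slanted side $x=\bar a t$ has the sign of $-\bar a|u^w-u^z|+q$, where $q=\sgn(\cdot)(f(u^w)-f(u^z))$ satisfies $|q|\le \bar a|u^w-u^z|$; this contribution is therefore $\le 0$ and can be dropped. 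The remaining outflow term at $x=1$ is $\int_0^\tau|q(t,1)|\,dt$, where $q(t,1)=|f(u^w(t,1))-f(u^z(t,1))|$ by the traces. Altogether we obtain
\begin{equation*}
\int_{\bar a\tau}^1|u^w-u^z|(\tau,x)\,dx+\int_0^\tau\bigl|f(u^w(t,1))-f(u^z(t,1))\bigr|\,dt\le \int_0^1|u_0-u_0|\,dx=0 .
\end{equation*}
Since $f'\ge a>0$ by \eqref{eq:pos}, $f$ is strictly increasing, so $u^w(t,1)=u^z(t,1)$ for a.e.\ $t<\tau$. Letting $\tau\uparrow\bar\delta$ proves the first claim. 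Note that $\bar a\tau<1$ is exactly what keeps the slanted line away from $x=1$.

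\textbf{Step 3 (general case).} Suppose $w=z$ on $(0,\tilde t)$. We first show $u^w=u^z$ on $(0,\tilde t)\times(0,1)$. This follows from \eqref{eq:L1stab}, which gives $\|u^w(t)-u^z(t)\|_{L^1}\le\int_0^t|f(w)-f(z)|\,ds=0$ for $t\le\tilde t$. Next, restart at time $\tilde t$. The restarted problems have the common initial datum $u^w(\tilde t,\cdot)=u^z(\tilde t,\cdot)$, which is bounded by $M$, so the same speed bound $\bar a$ applies. By uniqueness in Theorem~\ref{thm:open}, the restarted solutions are the time-translates of $u^w$ and $u^z$, and their traces coincide with the original ones. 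Applying Step 2 then gives equality of the outgoing traces on $(\tilde t,\tilde t+\bar\delta)\cap(0,T)$.

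\textbf{Main obstacle.} The delicate point is the rigorous justification of Step 2 in the weak setting. One must pass from compactly supported test functions to the slanted and boundary pieces, and this requires the strong traces at $x=1$ together with an $L^1$-continuity-type control along the lines $x=\bar a t+c$. I would handle it as follows: take $\psi_\varepsilon(x-\bar a t-c)$ with a fixed offset $c>0$, integrate also with a cutoff $\theta_\eta(x)$ near $x=1$, and let $\eta\to0$ using \eqref{eq:convbords}. Afterwards let $\varepsilon\to0$, where the slanted terms have a favorable sign, and finally let $c\to0$.
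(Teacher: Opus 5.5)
Your proof is correct and follows essentially the same route as the paper. Both arguments use a doubling-of-variables $L^1$ estimate on a region bounded by a line of slope $\bar a=\bar\delta^{-1}$. The slanted-side term is dropped because $|f(u^w)-f(u^z)|\le \bar a\,|u^w-u^z|$ on the range given by Proposition~\ref{prop:mp}, and the outflow term at $x=1$, identified through the strong traces, forces $u^w(\cdot,1)=u^z(\cdot,1)$ by strict monotonicity of $f$.

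The only difference is minor. For $\tilde t>0$ the paper starts the slanted line at $(\tilde t,0)$ and uses $w=z$ to kill the inflow flux on $(0,\tilde t)$ in a single estimate. You instead apply \eqref{eq:L1stab} on $(0,\tilde t)$ and then restart. That is equivalent, and the restart needs only that time-translates of entropy solutions are entropy solutions with initial datum $u(\tilde t,\cdot)$, which follows from $u\in C([0,T];L^1)$; uniqueness is not required.
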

This is shown in Appendix \ref{app:delay}.

\section{Closed-loop well-posedness via a method of steps}\label{sec:closed}

We now prove Theorem~\ref{thm:main}.
The construction exploits Lemma~\ref{lem:outflow_indep}.

\subsection{Stepwise construction}
First note that when $f$ is globally Lipschitz, then, for any $w\in L^{\infty}$
\begin{equation}
\bar{a}(w) \leq C_{f},
\end{equation}
where $C_{f}$ is the Lipschitz constant of $f$. As a consequence, denoting $\tau = C_{f}^{-1}$,
\begin{equation}
0<\tau \leq \delta(w),\;\forall w\in L^{\infty}.
\end{equation}
Let $T>0$ and $N\in\mathbb{N}$ such that $N\tau > T$ and denote
\begin{equation}t_{n} = \min(n\tau,T).\end{equation}
We inductively construct an input function $w\in L^\infty(0,T)$ and the corresponding open-loop solution $u$ of \eqref{eq:open} such that
$w(t)=G(u(t,1))$ a.e.\ in $(0,T)$.

\medskip\noindent
\textbf{Step $n=1$.}
Choose an arbitrary ``dummy'' input $\widetilde w^{(1)}\in L^\infty(0,T)$ and let $\widetilde u^{(1)}$ be the open-loop solution of \eqref{eq:open} with input $\widetilde w^{(1)}$.
Define
\begin{equation}
  w(t)\coloneqq G(\widetilde u^{(1)}(t,1)) \qquad \text{for a.e. } t\in(0,t_1).
\end{equation}
By Proposition~\ref{prop:traces} and the Lipschitz continuity of $G$, we have $w\in L^\infty(0,t_1)$.

Now let $u^{(1)}$ be the open-loop solution on $(0,t_1)$ with input $w$ (which is unique from Theorem~\ref{thm:open}).
By Lemma~\ref{lem:outflow_indep} (with $t_{0}=0$), the corresponding outgoing traces  $\widetilde u^{(1)}(t,1)$ and $u^{(1)}(t,1)$ coincide a.e.\ on $(0,t_1)$, hence
\begin{equation}
  w(t)=G(u^{(1)}(t,1)) \qquad \text{for a.e. } t\in(0,t_1),
\end{equation}
so the boundary feedback condition \eqref{eq:feedback} is satisfied on $(0,t_{1})$.

\medskip\noindent
\textbf{Induction step.}
Assume that we have defined $w$ on $(0,t_n)$ and constructed $u^{(n)}$, the open-loop solution on $(0,t_n)$ with input $w$, such that
$w(t)=G(u^{(n)}(t,1))$ for a.e.\ $t\in(0,t_n)$.
Choose any extension $\widetilde w^{(n+1)}\in L^\infty(0,T)$ such that $\widetilde w^{(n+1)}=w$ a.e.\ on $(0,t_n)$.

Let $\widetilde u^{(n+1)}$ solve \eqref{eq:open} (on $(0,T)$) with input $\widetilde w^{(n+1)}$.
Define for a.e.\ $t\in(t_n,t_{n+1})$:
\begin{equation}
\label{eq:defwn}
  w(t)\coloneqq G(\widetilde u^{(n+1)}(t,1)).
\end{equation}
Let $u^{(n+1)}$ be the open-loop solution on $(0,t_{n+1})$ with this input $w$. By Lemma~\ref{lem:outflow_indep} (applied with $\tau=t_n$), $\widetilde u^{(n+1)}(t,1)$ and $u^{(n+1)}(t,1)$ coincide on $(t_n,t_{n+1})$ since $w$ and $\tilde{w}^{(n+1)}$ coincide on $(0,t_{n})$ by definition.
Hence,
\begin{equation}w(t)=G(u^{(n+1)}(t,1))\end{equation}
holds a.e.\ on $(t_n,t_{n+1})$.

\medskip
After at most $N$ steps we obtain $w$ on $(0,T)$ and an entropy solution $u$ on $(0,T)\times(0,1)$ of the system \eqref{eq:open} which satisfies the boundary feedback relation \eqref{eq:feedback}. It is therefore an entropy solution of the system \eqref{eq:sys}--\eqref{eq:feedback} since, from
 Theorem~\ref{thm:open}, $u\in C([0,T];L^1(0,1))\cap L^\infty((0,T)\times(0,1))$.

\subsection{Uniqueness}
Let $u,v$ be two entropy solutions of the closed-loop problem \eqref{eq:sys}--\eqref{eq:feedback} with initial condition $u_{0}\in L^{\infty}(0,1)$.
Set $w_u(t)\coloneqq G(u(t,1))$ and $w_v(t)\coloneqq G(v(t,1))$.
On $(0,t_1)$, Lemma~\ref{lem:outflow_indep} (with $\tilde{t}=0$) implies that $u(\cdot,1)=v(\cdot,1)$ a.e.\ 
 hence
$w_u=w_v$ a.e.\ on $(0,t_1)$.
By open-loop uniqueness (Theorem~\ref{thm:open}), we infer $u=v$ on $(0,t_1)\times(0,1)$.

Assume inductively that $u=v$ on $(0,t_n)\times(0,1)$.
Then $w_u=w_v$ a.e.\ on $(0,t_n)$, and Lemma~\ref{lem:outflow_indep} (with $\tilde{t}=t_n$) yields $u(\cdot,1)=v(\cdot,1)$ a.e.\ on $(t_n,t_{n+1})$.
Thus $w_u=w_v$ a.e.\ on $(t_n,t_{n+1})$, and open-loop uniqueness gives $u=v$ on $(0,t_{n+1})\times(0,1)$.
This ends the proof of the induction and, after finitely many steps, $u\equiv v$ on $(0,T)\times(0,1)$.

This completes the proof of Theorem~\ref{thm:main}.

\section{Exponential stability in the $L^{1}$-norm}
\label{sec:proof-stab-L1}

We now prove Theorem \ref{thm:expstab}. Let $T>0$ and $u$ be an entropy solution of \eqref{eq:sys}--\eqref{eq:feedback}. We define
\begin{equation}
\label{eq:defV}
V(t) = \int_{0}^{1} \sum\limits_{i=1}^{n} p_{i}e^{-\mu x} |u_{i}(t,x)| dx,
\end{equation}
which from Theorem \ref{thm:main} belongs to $C([0,T];\mathbb{R}_{+})$. Let $\phi \in C_{c}^{1}((0,T);\mathbb{R}_{+})$,
\begin{equation}
\int_{0}^{T}V(t)\phi_{t} dt = \sum\limits_{i=1}^{n}\int_{0}^{T}\int_{0}^{1}|u_{i}(t,x)| p_{i}e^{-\mu x}\phi_{t} \,dx\,dt.
\end{equation}
Let us denote
\begin{equation}
W_{i} = \int_{0}^{T}\int_{0}^{1}|u_{i}(t,x)| p_{i}e^{-\mu x}\phi_{t} \,dx\,dt,
\end{equation}
and define
\begin{equation}
\label{eq:wie}
W_{i,\varepsilon} = \int_{0}^{T}\int_{0}^{1}|u_{i}(t,x)| \chi_{\varepsilon}(x)p_{i}e^{-\mu x}\phi_{t} \,dx\,dt,
\end{equation}
where $\chi_{\varepsilon}$ is defined by
\begin{equation}
\label{defchiepsilon}
\chi_{\varepsilon}(x) = \begin{cases}
x/\varepsilon \;\text{ for }x\in(0,\varepsilon]\\
1\; \text{ for }x\in(\varepsilon, 1-\varepsilon]\\
(1-x)/\varepsilon\;\text{ for }x\in(1-\varepsilon,1].
\end{cases}
\end{equation}
Using Definition \ref{def:kru} with $k=0$,
this gives\footnote{\label{foot-note-chi}
Strictly speaking the definition of entropy solutions requires $\varphi\; : (t,x)\mapsto \chi_{\varepsilon}(x)p_{i}e^{-\mu x}\phi(t)$ to belong to $C^{1}_{c}((0,T)\times(0,1);\mathbb{R}_{+})$ and therefore $\chi_{\varepsilon}\in C_{c}^{1}((0,1);\mathbb{R}_{+})$. However the function $\chi_{\varepsilon}$ can be  approximated
 by functions $(\chi_{\varepsilon,k})_{k\in \mathbb{N}}$ in $C_{c}^{1}((0,1);\mathbb{R}_{+})$ such that $|\chi'_{\varepsilon,k}|_{L^\infty(0,1)}\leq 2/\varepsilon$ and $\lim_{k\rightarrow +\infty}\chi'_{\varepsilon,k}(x) =\chi'_{\varepsilon}(x)$, for every $x\in (0,1)\setminus\{\varepsilon, 1-\varepsilon\}$. Then \eqref{eq:kru1} holds with $\chi_{\varepsilon}$ replaced by $\chi_{\varepsilon,k}$ and letting $k \rightarrow +\infty$ in these \eqref{eq:kru1} with $\chi_{\varepsilon,k}$, one gets the desired \eqref{eq:kru1}.}

\begin{equation}
\label{eq:kru1}
\begin{split}
W_{i,\varepsilon} \geq&  - \int_{0}^{T}\int_{0}^{1}\sgn(u_{i})\bigl(f_{i}(u_{i})-f_{i}(0)\bigr)
\left(\chi'_{\varepsilon}(x)-\mu\chi_{\varepsilon}(x)\right)p_{i}e^{-\mu x}\phi(t)\,dx\,dt\\
 =&  -\int_{0}^{T}\left(\frac{1}{\varepsilon}\int_{0}^{\varepsilon}\sgn(u_{i})\bigl(f_{i}(u_{i})-f_{i}(0)\bigr)p_{i}e^{-\mu x}\,dx\right)\,\phi(t)dt\\
 &+\int_{0}^{T}\left(\frac{1}{\varepsilon}\int_{1-\varepsilon}^{1}\sgn(u_{i})\bigl(f_{i}(u_{i})-f_{i}(0)\bigr)p_{i}e^{-\mu x}\,dx\right)\,\phi(t)dt \\
 &+ \int_{0}^{T}\int_{0}^{1}\sgn(u_{i})\bigl(f_{i}(u_{i})-f_{i}(0)\bigr)\mu\chi_{\varepsilon}(x)p_{i}e^{-\mu x}\phi(t)\,dx\,dt.
\end{split}
\end{equation}
Letting $\varepsilon \rightarrow 0$ in the above inequality and in \eqref{eq:wie}, together with \eqref{eq:convbords} we obtain
\begin{multline}
\label{Wigeq}
W_{i} \geq \mu \int_{0}^{T}\int_{0}^{1}\sgn(u_{i})\bigl(f_{i}(u_{i})-f_{i}(0)\bigr)p_{i}e^{-\mu x}\phi(t)\,dx\,dt\\
+ \int_{0}^{T}\Big[\sgn(u_{i}(t,1))\bigl(f_{i}(u_{i}(t,1))-f_{i}(0)\bigr)e^{-\mu}-\sgn(u_{i}(t,0))
\bigl(f_{i}(u_{i}(t,0))-f_{i}(0)\bigr)\Big]p_{i}\phi(t)dt.
\end{multline}
Using the fact that $u$ satisfies \eqref{eq:feedback} and \eqref{eq:pos}
\begin{multline}
W_{i} \geq  a  \mu \int_{0}^{T}\int_{0}^{1}|u_{i}| p_{i}e^{-\mu x}\phi(t)\,dx\,dt\\
+\int_{0}^{T}\Big(|f_{i}(u_{i}(t,1))-f_{i}(0)|e^{-\mu}-|f_{i}(G_{i}(u(t,1)))-f_{i}(0)|\Big)p_{i}\phi(t)dt
\end{multline}
where $G_{i}(u)$ is the $i$-th component of $G(u)$ and where we used the fact that, from \eqref{eq:pos}, $f_{i}$ is strictly increasing and therefore $\sign(f_{i}(u)-f_{i}(0)) = \text{sgn}(u)$. Finally, summing and using \eqref{eq:defV} we obtain
\begin{multline}
\int_{0}^{T}V(t)\phi_{t}(t)dt \geq a \mu \int_{0}^{T}V(t)\phi(t) dt \\+ \int_{0}^{T}\sum\limits_{i=1}^{n}\Big( |f_{i}(u_{i}(t,1))-f_{i}(0)|e^{-\mu}-|f_{i}(G_{i}(u(t,1)))-f_{i}(0)|\Big)p_{i}\phi(t)dt.
\end{multline}
Using \eqref{eq:condstab} we obtain
\begin{equation}
\int_{0}^{T}V(t)\phi_{t}(t)dt \geq a \mu \int_{0}^{T}V(t)\phi(t) dt,
\end{equation}
which holds for any $\phi\in C_{c}^{1}((0,T);\mathbb{R}_{+})$ and implies that
\begin{equation}
V(t) \leq V(0)e^{-a \mu t},\forall t\in[0,T].
\end{equation}
From \eqref{eq:defV} we obtain directly that there exists $C$ depending only on $p_{i}$ and $\mu$ such that
\begin{equation}
\|u(t,\cdot)\|_{L^{1}(0,1)} \leq C e^{-a \mu t} \|u_{0}\|_{L^{1}(0,1)},\forall t\in[0,T].
\end{equation}
This completes the proof of Theorem \ref{thm:expstab}.

\medskip
We now prove Theorem~\ref{thm:fconcave}.
We observe that, instead of considering the Lyapunov function candidate \eqref{eq:defV}, we could also consider the more general function
\begin{equation}
V(t) = \int_{0}^{1} \sum\limits_{i=1}^{n} p_{i}e^{-\mu x} |h_{i}(u_{i}(t,x))| dx,
\end{equation}
where $h_{i}$ is a $C^{1}$ convex function which satisfies
\begin{equation}
\label{eq:enceta}
\alpha U\leq h_{i}(U) \leq \beta U, \quad \forall U\in \mathbb{R},
\end{equation}
where $\alpha$ and $\beta$ are two positive constants. We choose
\begin{equation}
h_{i}(U) = \int_{0}^{U}\frac{1}{f_{i}'(s)}ds.
\end{equation}
Since the $f_{i}$ are concave, it follows that, from \eqref{eq:pos}, the $h_{i}$ are convex. Moreover, by \eqref{eq:pos} and since $f$ is globally Lipschitz, the $h_{i}$    satisfy also \eqref{eq:enceta}. Proceeding as previously, applying \eqref{eq:kru-convex} with entropy $\eta_{i}$ and associated entropy flux $q_{i}(U) = |U|$ (note that $\text{sgn}(h_{i}(U))=\text{sgn}(U)$), for any $\phi\in C^{1}_{c}([0,T);\mathbb{R}_{+})$ we have
\begin{equation}
\int_{0}^{T}V(t)\phi_{t}(t)dt \geq a \mu \int_{0}^{T}V(t)\phi(t) dt + \int_{0}^{T}\sum\limits_{i=1}^{n}\left(|u_{i}(t,1)|e^{-\mu}-|G_{i}(u(t,1))|\right)p_{i}\phi(t)dt.
\end{equation}
Selecting now $p_{i} = \Delta_{i}$, under the assumption \eqref{eq:condstab2}, we have
\begin{equation}
\sum\limits_{i=1}^{n}\left(|u_{i}(t,1)|e^{-\mu}-|G_{i}(u(t,1))|\right)p_{i} \geq 0,
\end{equation}
hence
\begin{equation}
\int_{0}^{T}V(t)\phi_{t}(t)dt \geq a \mu \int_{0}^{T}V(t)\phi(t) dt,
\end{equation}
and similarly as previously the exponential stability in the $L^{1}$ norm holds. This completes the proof of Theorem \ref{thm:fconcave}.

\section{Exponential stability in the $L^{\infty}$-norm}
\label{sec:Linfty}
We now prove Theorem~\ref{thm:expstab-infty}. Our proof is partly inspired by \cite{2015-Bastin-Coron-SICON} (see also \cite[Section 4.1]{2016-Bastin-Coron-book}). Let $T>0$ and let
\begin{equation}
\label{property-nu}
\nu\in (0,\mu).
\end{equation}
Let us first assume that  $f$ is globally Lipschitz. Then, according to Theorem \ref{thm:main}, \eqref{eq:sys}--\eqref{eq:feedback} has a unique entropy solution $u$. 
We now define $V:[0,T]\rightarrow [0,+\infty)$ by (compare with \eqref{eq:defV})
\begin{equation}
\label{eq:defV-infty}
V(t) =\|\left(\Delta_{1}e^{-\nu x} |u_{1}(t,x)|,\ldots,\Delta_{n}e^{-\nu x} |u_{n}(t,x)\|  \right)^\top\|_{L^\infty(0,1)},
\end{equation}
which from Theorem \ref{thm:main} belongs to $L^\infty((0,T);\mathbb{R}_{+})$. In contrast with the $V$  of section~\ref{sec:proof-stab-L1}, this new $V$ may be discontinuous. However, since $u\in C([0,T];L^1(0,1))$, $V(t)$  is well defined for every $t\in[0,T]$ and not only for almost every $t\in [0,T]$. In \eqref{eq:defV-infty}, $\mathbb{R}^n$ is equipped with the $|\cdot|_\infty$-norm.
Hence
\begin{equation}
\label{eq:defV-infty-new}
V(t) =\max\{\Delta_{i}\|e^{-\nu x} u_{i}(t,x)\|_{L^\infty(0,1)};\,i\in \{1,\ldots,n\} \}.
\end{equation}
For a (strictly) positive integer $m$, let us define $V_m:[0,T]\rightarrow [0,+\infty)$ by
\begin{equation}
\label{defVp}
V_m(t) := \left(\int_{0}^{1} \sum\limits_{i=1}^{n} \Delta_{i}^{2m}e^{-2m\nu x} u_{i}^{2m}(t,x) \, dx\right)^{\frac{1}{2m}}.
\end{equation}
From Theorem \ref{thm:main} we know that $u\in C([0,T];L^{1}(0,1))\cap L^{\infty}((0,T)\times(0,1))$ from which we gain directly that $u\in C([0,T];L^{2m}(0,1))$ and therefore $V_{m}$
now belongs to $C([0,T];\mathbb{R}_{+})$.

Let us prove that
\begin{gather}
\label{Vmcv-pointwise}
\lim_{m\rightarrow +\infty} V_m(t)=V(t), \quad \forall  t\in (0,T).
\end{gather}
For a fixed $t\in [0,T]$, let $M:[0,1]\rightarrow [0,+\infty)$ be defined by
\begin{equation}
\label{defM}
M(x) := \max\{\Delta_{i}e^{-\nu x} |u_{i}(t,x)|;\; 1\leq i\leq n\}.
\end{equation}
One has
\begin{equation}
\label{M2mleq}
M^{2m}(x) = \max\{  \Delta_{i}^{2m}e^{-2m\nu x} u_{i}^{2m}(t,x) ;\; 1\leq i\leq n\} \leq \sum_{i=1}^{n} \Delta_{i}^{2m}e^{-2m\nu x} u_{i}^{2m}(t,x).
\end{equation}
Integrating \eqref{M2mleq} on $[0,1]$, one gets
\begin{equation}
\label{ML2mleq}
\|M\|_{L^{2m}(0,1)} : = \left( \int_{0}^{1} M^{2m}(x) \, dx \right)^{\frac{1}{2m}} \leq V_m(t).
\end{equation}
For the upper-bound of $V_m$
\begin{equation}
\sum_{i=1}^{n} \Delta_{i}^{2m}e^{-2m\nu x} u_{i}^{2m}(t,x) \leq n \max\{  \Delta_{i}^{2m}e^{-2m\nu x} u_{i}^{2m}(t,x) ;\; 1\leq i\leq n\}= n M^{2m}(x).
\end{equation}
Integrating this inequality on $[0,1]$ and taking the $(2m)$-th root, we get
\begin{equation}
\label{upper-boundVm}
V_m \leq  n^{\frac{1}{2m}} |M|_{L^{2m}(0,1)}.
\end{equation}
Finally \eqref{Vmcv-pointwise} follows from \eqref{ML2mleq}, \eqref{upper-boundVm} and the following classical result
\begin{equation}
\label{cvLmLinfty}
\lim_{m\rightarrow +\infty} \|M\|_{L^{2m}(0,1)}=\|M\|_{L^\infty (0,1)}.
\end{equation}
Let $\phi \in C_{c}^{1}((0,T);\mathbb{R}_{+})$. One has,
\begin{equation}
\label{intVmp}
\int_{0}^{T} V_m^{2m}(t) \phi_{t} dt = \sum\limits_{i=1}^{n}W_{m,i},
\end{equation}
with
\begin{equation}
W_{m,i}:= \int_{0}^{T}\int_{0}^{1} \Delta_{i}^{2m} e^{-2m\nu x} u_{i}^{2m}(t,x) \phi_{t}\, dx\, dt.
\end{equation}
Let us define, for $\varepsilon \in (0,1/2]$,
\begin{equation}
\label{eq:wmie}
W_{m,i,\varepsilon}:= \int_{0}^{T}\int_{0}^{1} u_{i}^{2m}(t,x) \chi_{\varepsilon}(x) \Delta_{i}^{2m}e^{-2m\nu x}\phi_{t}\, dx\, dt,
\end{equation}
where $\chi_{\varepsilon}$ is again defined by \eqref{defchiepsilon}. Let $q_{m,i}: \mathbb{R}\rightarrow \mathbb{R}$ be such that
\begin{equation}
\label{def-flux-m-i}
\frac{dq_{m,i}}{dz}(z)=z^{2m-1}f'_i(z), \quad \forall z\in \mathbb{R}, \text{ and } q_{m,i}(0)=0.
\end{equation}
Recalling that $u$ is an entropy solution and applying \eqref{eq:kru-convex} with $\eta(z)=z^{2m}$ and $\varphi(t,x)=\phi(t)\chi_{\varepsilon}(x)e^{-2m\nu x}$, one has
\begin{equation}
\label{eq:spliinf}
\begin{split}
W_{m,i,\varepsilon} \geq&  - \int_{0}^{T}\int_{0}^{1}q_{m,i}(u_i(t,x))\left(\chi'_{\varepsilon}(x)-2m\nu\chi_{\varepsilon}(x)\right)
\Delta_{i}^{2m}e^{-\nu x}\phi(t)\,dx\,dt\\
 =&  -\int_{0}^{T}\left(\frac{1}{\varepsilon}\int_{0}^{\varepsilon}q_{m,i}(u_i(t,x))\Delta_{i}^{2m}e^{-2m\nu x}\,dx\right)\,\phi(t)dt\\
 &+\int_{0}^{T}\left(\frac{1}{\varepsilon}\int_{1-\varepsilon}^{1}q_{m,i}(u_i(t,x))\Delta_{i}^{2m}e^{-2m\nu x}\,dx\right)\,\phi(t)dt \\
 &+ 2m \nu \int_{0}^{T}\int_{0}^{1}q_{m,i}(u_i(t,x))\Delta_{i}^{2m}e^{-2m\nu x}\chi_{\varepsilon}(x)p_{i}\phi(t)\,dx\,dt.
\end{split}
\end{equation}
Letting $\varepsilon \rightarrow 0$ in \eqref{eq:spliinf} and in \eqref{eq:wmie}, and using \eqref{eq:convbords} we obtain,
\begin{equation}
\label{Wmigeq}
W_{m,i} \geq 2m \nu \int_{0}^{T}\int_{0}^{1}q_{m,i}(u_i(x))\Delta_{i}^{2m}e^{-2m\nu x}\Delta_{i}^{2m}\phi(t)\,dx\,dt+\int_0^TB_{m,i}(t)\phi(t)\, dt,
\end{equation}
with
\begin{equation}
\label{defBmi}
B_{m,i}(t):=\Delta_{i}^{2m}\left(q_{m,i}(u_i(t,1))e^{-2m\nu}-q_{m,i}(u(t,0))\right).
\end{equation}
Using the fact that $u$ satisfies \eqref{eq:feedback}, we have
\begin{equation}
\label{Bmi-equal}
B_{m,i}(t):=\Delta_{i}^{2m}\left(q_{m,i}(u_i(t,1))e^{-2m\nu}-q_{m,i}(G_i(u(t,1)))\right),
\end{equation}
where, again, $G_{i}(u)$ is the $i$-th component of $G(u)$. Let $\bar{\mathbf{f}}>0$ be such that
\begin{equation}
\label{majoration-fi}
f_i(z) \leq \bar{\mathbf{f}}, \quad \forall z \in [-\|u_i\|_{L^\infty ((0,T)\times (0,1))}, \|u_i\|_{L^\infty ((0,T)\times (0,1))}],\;\forall i \in \{1,\ldots,n\}.
\end{equation}
 From \eqref{eq:pos}, \eqref{def-flux-m-i}, and \eqref{majoration-fi}, one has
\begin{equation}
\label{encadrementqmi}
 \frac{a}{2m}|\xi|^{2m}\leq q_{m,i}(\xi)\leq \frac{\bar{\mathbf{f}}}{2m}|\xi|^{2m},\quad \forall
 \xi\in \mathbb{R},\; \forall m \in \mathbb{N}\setminus\{0\},\; \forall  i \in \{1,\ldots,n\}.
\end{equation}
 From \eqref{Bmi-equal} and \eqref{encadrementqmi}, one has
\begin{equation}
\label{estimation-Bm}
B_m(t)\geq \sum_{i=1}^{n}\frac{1}{2m}
\left(a\left(\sum_{i=1}^{n} \Delta_{i}^{2m} u_i^{2m}(t,1) e^{-2m\nu}\right)- \bar{\mathbf{f}} \left(\sum_{i=1}^{n}\Delta_{i}^{2m}G_i(u(t,1))^{2m}\right)\right),
\end{equation}
with
\begin{equation}
\label{defBm}
B_m(t):=\sum_{i=1}^{n}B_{m,i}(t).
\end{equation}
We have the following result, which is classical, except maybe for the uniform convergence statement.
\begin{lemma}
\label{lemma-powerm}
For every $z=(z_1,\ldots,z_n)^\top \in \R^n$
\begin{equation}
\lim_{m\rightarrow +\infty} \left(\sum_{i=1}^n z_i^{2m}\right)^{\frac{1}{2m}}=|z|_\infty
\end{equation}
and this convergence is uniform on every bounded set of $\mathbb{R}^n$.
\end{lemma}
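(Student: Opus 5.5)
The plan is to sandwich the $\ell^{2m}$ norm between $|z|_\infty$ and $n^{1/(2m)}|z|_\infty$, exactly as was done for $V_m$ in \eqref{ML2mleq}--\eqref{upper-boundVm}. Fix $z\in\R^n$ and set $\|z\|_{2m}:=\left(\sum_{i=1}^n z_i^{2m}\right)^{\frac{1}{2m}}$.

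First, since the maximum of the nonnegative numbers $z_i^{2m}$ is one of the terms of the sum, we have
\begin{equation*}
|z|_\infty^{2m}=\max_i z_i^{2m}\leq \sum_{i=1}^n z_i^{2m}\leq n\,|z|_\infty^{2m}.
\end{equation*}
Taking the $(2m)$-th root, which is monotone on $[0,+\infty)$, gives
\begin{equation*}
|z|_\infty\leq \|z\|_{2m}\leq n^{\frac{1}{2m}}|z|_\infty .
\end{equation*}

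Second, we subtract $|z|_\infty$ and obtain the error bound
\begin{equation*}
0\leq \|z\|_{2m}-|z|_\infty\leq \left(n^{\frac{1}{2m}}-1\right)|z|_\infty .
\end{equation*}
Since $n^{1/(2m)}=e^{\ln n/(2m)}\to 1$ as $m\to+\infty$, the right-hand side tends to $0$, which proves the pointwise limit.

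Finally, for uniformity, let $B$ be a bounded set with $|z|_\infty\leq R$ for all $z\in B$. The bound above then gives
\begin{equation*}
\sup_{z\in B}\bigl|\|z\|_{2m}-|z|_\infty\bigr|\leq R\left(n^{\frac{1}{2m}}-1\right)\to 0 .
\end{equation*}
This establishes uniform convergence on $B$.

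The only point needing care is that the error bound is linear in $|z|_\infty$ with a factor independent of $z$. That is precisely why the convergence is uniform on bounded sets but not on all of $\R^n$.
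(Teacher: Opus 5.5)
Your proof is correct and is essentially the paper's argument: the paper's proof consists only of the two-sided bound $0\leq \bigl(\sum_{i=1}^n z_i^{2m}\bigr)^{\frac{1}{2m}}-|z|_\infty\leq (n^{\frac{1}{2m}}-1)|z|_\infty$, from which both the pointwise and the uniform-on-bounded-sets convergence follow as you explain. Your closing remark about non-uniformity on all of $\R^n$ holds for $n\geq 2$ (e.g.\ along $z=(R,\ldots,R)$, where the difference is $(n^{\frac{1}{2m}}-1)R$), but not for $n=1$, where the difference is identically zero; this side remark does not affect the proof.
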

\begin{proof}[Proof of Lemma~\ref{lemma-powerm}] It suffices to point out that
\begin{equation}
0 \leq \left(\sum_{i=1}^n z_i^{2m}\right)^{\frac{1}{2m}} - |z|_\infty \leq (n^{\frac{1}{2m}} - 1) |z|_\infty.
\end{equation}
\end{proof}
Note that
\begin{equation}
\label{limabm}
\lim_{m\rightarrow +\infty} a^{\frac{1}{2m}}=1 \text{ and }\lim_{m\rightarrow +\infty} \bar{\mathbf{f}}^{\frac{1}{2m}}(t)=1.
\end{equation}
Using \eqref{Grhoinfty<1}, \eqref{property-nu}, \eqref{estimation-Bm}, and Lemma~\ref{lemma-powerm}, we get the existence of $m_0\geq 1$ such that
\begin{equation}
\label{Bmgeq0}
B_m(t)\geq 0, \quad \forall t\in [0,T], \; \forall m\geq m_0.
\end{equation}
 From now on we assume that $m\geq m_0$. From \eqref{intVmp}, \eqref{Wmigeq}, \eqref{encadrementqmi} ,  \eqref{defBm}, and \eqref{Bmgeq0}, we get
\begin{equation}
\label{intVmpgeq-new}
\int_{0}^{T} V_m^{2m}(t) \phi_{t} dt \geq  2m\nu a \sum_{i=1}^{n}  \int_{0}^{T}\int_{0}^{1}  \Delta_{i}^{2m} e^{-2m\nu x}  u_{i}^{2m}(t,x) \phi(t)\,dx\,dt.
\end{equation}
Now from \eqref{intVmpgeq-new}, for every $\phi \in C_{c}^{1}((0,T);\mathbb{R}_{+})$ and for every  $0\leq t_1\leq t_2\leq T$, it can be checked that
\begin{equation}
\label{intVmpgeq}
V_m^{2m}(t_2) -V_m^{2m}(t_1) \leq   -2m\nu a  \int_{t_1}^{t_2}V_m^{2m}(t)\,dt.
\end{equation}
Indeed, one can proceed as for the proof of \eqref{Wigeq} (see in particular the footnote of page \pageref{foot-note-chi}: with a standard approximation procedure, one gets that, for every $\varepsilon \in (0,(t_2-t_1)/2]$, \eqref{intVmpgeq-new} holds for
\begin{equation}
\label{defphiepsilon}
\\ \phi(t)=\phi_{\varepsilon}(t) = \begin{cases}
0\;\text{ for } t\in(0,t_1]\\
(t-t_1)/\varepsilon\; \text{ for } t\in(t_1, t_1+\varepsilon]\\
1\;\text{ for } t \in (t_1+\varepsilon, t_2-\varepsilon)\\
(t_2-t)/\varepsilon \;\text{ for } t \in [t_2-\varepsilon, t_2]\\
0\;\text{ for } t \in (t_2, 1],
\end{cases}
\end{equation}
and, letting $\varepsilon\rightarrow 0^+$ in \eqref{intVmpgeq-new} with this $\phi$, one gets \eqref{intVmpgeq} if $t_2>t_1$, while \eqref{intVmpgeq} is trivial if $t_1=t_2$.
Let us point out that \eqref{intVmpgeq} implies that
\begin{equation}
\label{nonincreasing}
\text{$V_m$ is a non-increasing function on $[0,T]$.}
\end{equation}
Note that, by the convexity of $\xi\in \mathbb{R}\rightarrow \xi^{2m}$,
\begin{equation}
\label{Vmdecreasing}
V_m^{2m}(t_2) -V_m^{2m}(t_1) \geq  2mV_m^{2m-1}(t_1)\left(V_m(t_2)-V_m(t_1)\right).
\end{equation}
From \eqref{intVmpgeq}, \eqref{Vmdecreasing}, and \eqref{nonincreasing}, one gets
\begin{equation}
\label{Vmdecreasing-est}
V_m^{2m-1}(t_1) \left(V_m(t_2)-V_m(t_1)\right)\leq -\nu a \int_{t_1}^{t_2}V^{2m}_m(t)\, dt\leq -\nu a \left(t_2-t_1\right) V_m^{2m}(t_2).
\end{equation}
One has the following lemma, whose proof is given in Appendix~\ref{appendix-proof-lemmatheta}.
\begin{lemma}
\label{lemma-est-decreasing}
Let $\theta :[0,T]\rightarrow [0,+\infty)$ be a continuous non-increasing function such that, for some positive integer $m\geq 1$ and for some constant $c>0$,
\begin{equation}
\label{property-theta}
\theta^{2m-1}(t_1) \left(\theta(t_2)-\theta(t_1)\right) \leq -c(t_2-t_1) \theta^{2m}(t_2),\quad \forall \;0\leq t_1\leq t_2\leq T.
\end{equation}
Then,
\begin{equation}
\label{distribution-inequality}
\theta(t) \leq e^{-ct}\theta(0), \quad \forall t \in [0,T].
\end{equation}
\end{lemma}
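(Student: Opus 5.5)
The plan is a discrete Grönwall argument on a fine partition, using uniform continuity of $\theta$ to make the nonlinear factor $\theta^{2m}(t_2)/\theta^{2m}(t_1)$ arbitrarily close to $1$.

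\textbf{Step 1: reduction to $\theta(t)>0$.} Fix $t\in(0,T]$; the case $t=0$ is trivial. If $\theta(t)=0$, then \eqref{distribution-inequality} holds at $t$ since $\theta(0)\ge 0$. So assume $\theta(t)>0$. Since $\theta$ is non-increasing, $\theta(s)\ge\theta(t)>0$ for all $s\in[0,t]$.

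\textbf{Step 2: a one-step inequality.} Take $0\le t_1\le t_2\le t$ and set $h=t_2-t_1$ and $r=\theta(t_2)/\theta(t_1)\in(0,1]$. Dividing \eqref{property-theta} by $\theta^{2m}(t_1)>0$ gives
\begin{equation*}
r-1\le -c\,h\,r^{2m}.
\end{equation*}

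\textbf{Step 3: the ratio is uniformly close to $1$.} Let $\omega$ be a modulus of continuity of $\theta$ on $[0,T]$, which exists by uniform continuity on a compact interval. Then
\begin{equation*}
r\ge 1-\frac{\omega(h)}{\theta(t)}.
\end{equation*}
Fix $\eta\in(0,1)$. For $h$ small enough, depending only on $\eta$, $m$, $\theta(t)$ and $\omega$, this gives $r^{2m}\ge 1-\eta$. Consequently
\begin{equation*}
r\le 1-c(1-\eta)h.
\end{equation*}

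\textbf{Step 4: iteration over a partition.} Take $N$ large, set $h=t/N$ and $s_k=kh$ for $k=0,\dots,N$. Telescoping the one-step bound over the partition gives
\begin{equation*}
\frac{\theta(t)}{\theta(0)}=\prod_{k=0}^{N-1}\frac{\theta(s_{k+1})}{\theta(s_k)}\le \bigl(1-c(1-\eta)t/N\bigr)^N .
\end{equation*}
Letting $N\to\infty$, the right-hand side tends to $e^{-c(1-\eta)t}$. Then letting $\eta\to 0$ yields $\theta(t)\le e^{-ct}\theta(0)$.

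\textbf{Main obstacle.} The only delicate point is Step 3. The hypothesis \eqref{property-theta} evaluates $\theta^{2m}$ at the right endpoint $t_2$ rather than the left endpoint $t_1$, so the ratio must be shown to tend to $1$ uniformly in the subinterval as the mesh goes to $0$. The uniform lower bound $\theta\ge\theta(t)>0$ on $[0,t]$ from Step 1, combined with uniform continuity, is exactly what provides this. Everything else is the standard limit $(1-x/N)^N\to e^{-x}$.
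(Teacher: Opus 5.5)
Your proof is correct, but it follows a different route from the paper. The paper lets $h\to 0^+$ first. Dividing the hypothesis by $h\,\theta^{2m-1}(t)$ and using continuity of $\theta$ at $t$, it obtains the pointwise inequality $D^+\theta(t)\le -c\,\theta(t)$ for the upper right Dini derivative. It then deduces $D^+g\le 0$ for $g(t)=\theta(t)e^{ct}$. It concludes with the classical theorem, quoted from Royden, that a continuous function with non-positive upper right Dini derivative is non-increasing.

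You keep $h$ finite instead. You prove a uniform one-step ratio bound, telescope it over a partition, and use $(1-x/N)^N\to e^{-x}$. The paper's argument is shorter but relies on a nontrivial real-analysis result for the integration step; yours is self-contained and elementary. Your Step 1 (fixing $t$ and using monotonicity to get $\theta\ge\theta(t)>0$ on $[0,t]$) also handles the reduction to positive values slightly more cleanly than the paper's blanket assumption $\theta>0$ on $[0,T)$.

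Your route also shows that continuity can be dropped. Since $r\mapsto r+c h r^{2m}$ is increasing on $[0,+\infty)$, Step 2 alone gives $r\le r^*(h)$. Here $r^*(h)\in(0,1)$ is the root of $r+chr^{2m}=1$, and it satisfies $r^*(h)=1-ch+O(h^2)$. Hence $\theta(t)/\theta(0)\le r^*(t/N)^N\to e^{-ct}$ with no modulus of continuity at all. The Dini-derivative argument, by contrast, genuinely uses continuity of $\theta$.
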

By \eqref{nonincreasing} and \eqref{Vmdecreasing-est}, the assumptions of this lemma are satisfied for $\theta:=V_m$ and $c:=\nu a$. Hence, by this lemma,
\begin{equation}
\label{decreasing-estimate}
V(t) \leq e^{-\nu at}V(0),\quad \forall t \in [0,T],
\end{equation}
which concludes the proof of Theorem~\ref{thm:expstab-infty} if $f$ is globally Lipschitz  since, by \eqref{eq:defV-infty-new}, there exists $C\geq 1$ independent of $u$, $t$, and $T$ such that
\begin{equation}
\label{equiv-o-norm}
\frac{1}{C}\|u(t,\cdot)\|_{L^\infty (0,1)}\leq V(t)\leq C \|u(t,\cdot)\|_{L^\infty (0,1)}.
\end{equation}

If $f$ is not globally Lipschitz, one can no longer get the existence  of the solution $u$ of \eqref{eq:sys}--\eqref{eq:feedback} as a direct application of  Theorem~\ref{thm:main}. To show, without this assumption of global Lipschitzness, the existence and uniqueness of the solution $u$    (and that it satisfies the exponential stability estimate) on $[0,+\infty)$, one can proceed as follows. For $u_{0}$ given and $T>0$ arbitrary, let 
 $\bar{\mathbf{f}}>0$ to be chosen and define 
$\tilde{f}_{i}$ that coincides with $f_{i}$ on $\{y\in\mathbb{R}^{n}\;|\; |y|< \bar{\mathbf{f}} \}$ and is extended on $\mathbb{R}^{n}$ in a globally Lipschitz way such that its Lipschitz constant is
\begin{equation}
C_{\tilde{f}_{i}} \leq 2\sup\limits_{|y|\leq \bar{\mathbf{f}}}\{|\partial_{u}f_{i}(y)|\}.
\end{equation}
The system \eqref{eq:sys}, \eqref{eq:feedback} with $\tilde{f}_{i}$ instead of $f_{i}$ satisfies the assumption of Theorem \ref{thm:main} and there exists a unique entropy solution $u\in L^{\infty}((0,T)\times(0,1))$ which satisfies, from \eqref{exp-decay-infty},
\begin{equation}
\|u\|_{L^{\infty}((0,T)\times(0,1))}\leq C \|u_{0}\|_{L^{\infty}},
\end{equation}
where $C$ only depends on the $\Delta_{i}$. Thus if we choose $\bar{\mathbf{f}}>C \|u_{0}\|_{L^{\infty}}$ (note that $u$ might depend on $\bar{\mathbf{f}}$ but $u_{0}$ does not), then $u$ is also a solution of \eqref{eq:sys}, \eqref{eq:feedback} with $f_{i}$ (indeed, $f_{i}(u(t,x))= \tilde{f}_{i}(u(t,x))$ for a.e. $(t,x)\in(0,T)\times(0,1)$). This shows that \eqref{eq:sys}, \eqref{eq:feedback} admits an entropy solution in $L^{\infty}((0,T)\times(0,1))$ with strong boundary traces and satisfies \eqref{exp-decay-infty} on $[0,T)$ instead of $[0,+\infty)$ where $C$ does not depend on $T$. Extending the solution on $[0,+\infty)$ can be done classically as in \cite{2015-Bastin-Coron-SICON,HayatC12019}.

\begin{remark}
Since \eqref{decreasing-estimate} holds for every $\nu\in (0,\mu)$, letting $\nu\rightarrow\mu^-$ leads to
\begin{equation}
\label{decreasing-estimate-tilde}
\dot {\widetilde V}(t) \leq e^{-\mu at}\widetilde V(0),\quad \forall t \in [0,T],
\end{equation}
for $\widetilde V$ defined by
\begin{equation}
\label{eq:defV-infty-tilde}
\widetilde V(t) := \Big\| \left(\Delta_{1}e^{-\mu x} |u_{1}(t,x)|,\ldots,\Delta_{n}e^{-\mu x} |u_{n}(t,x)|  \right)^\top \Big\|_{L^\infty(0,1)}.
\end{equation}
\end{remark}

\section{Conclusion}
In this paper, we have established the well-posedness and the global exponential stability of 1D systems of scalar conservation laws closed by a nonlocal boundary feedback. While the existing literature on the boundary stabilization of hyperbolic systems mostly focuses on local stability for classical solutions in regular functional spaces (such as $C^1$ or $H^2$), our work addresses the natural framework of weak entropy solutions, where shocks can form in finite time. A key novelty of our approach are the resulting 
explicit dissipativity conditions on the feedback map to guarantee global exponential stability in both
$L^1$ and $L^\infty$ norms. Notably, we do not require the solutions to have bounded variations (class BV functions) and our $L^\infty$ stability framework succeeds in guaranteeing exponential decay without requiring the flux to be globally Lipschitz. We also provide a local exponential stability result in $L^{\infty}$ norm under weaker assumptions. This work paves the way for several subsequent open problems. Among others, future research directions include:
    Investigating the existence and characterization of forward invariant sets for the closed-loop system under boundary feedback; Removing the diagonal assumption on the fluxes to address general systems of conservation laws that are fully coupled in the interior domain; Relaxing the strict wave speed assumption \eqref{eq:pos} in the global exponential stability to encompass systems where characteristic velocities may vanish or change sign; Extending these well-posedness and boundary stabilization strategies to scalar conservation laws in multi-dimensional spatial domains.

\appendix

\section{Proof of Lemma~\ref{lemma-est-decreasing}}
\label{appendix-proof-lemmatheta}

Since $\theta$ is non-negative and non-increasing, if there exists some $t_0 \in [0, T]$ such that $\theta(t_0) = 0$, then $\theta(t) = 0$ for all $t \in [t_0, T]$. In this case, the inequality $\theta(t) \leq \theta(0)e^{-ct}$ holds trivially for every $t\in [t_0,T]$. Thus, we may assume $\theta(t) > 0$ for every $t\in [0,T)$.

 Fix $t \in [0, T)$ and let $h > 0$ such that $t+h \leq T$. Setting $t_1 = t$ and $t_2 = t+h$ in \eqref{property-theta}, we have:
\begin{equation}
\theta^{2m-1}(t)(\theta(t+h) - \theta(t)) \leq -ch \theta^{2m}(t+h).
\end{equation}
Dividing both sides by $h \theta^{2m-1}(t)$ (which is positive), we obtain:
\begin{equation}
\label{esti-quotient}
\frac{\theta(t+h) - \theta(t)}{h} \leq -c \frac{\theta^{2m}(t+h)}{\theta^{2m-1}(t)}.
\end{equation}
We consider the upper right  Dini derivative, defined as
\begin{equation}
D^+\theta(t) = \limsup_{h \to 0^+} \frac{\theta(t+h) - \theta(t)}{h}.
\end{equation}
Taking the limit as $h \to 0^+$ on the right hand side and using the continuity of $\theta$ as well as \eqref{esti-quotient}, we get
\begin{equation}
\label{Dinithetaleq}
D^+\theta(t) \leq \lim_{h \to 0^+} \left( -c \frac{\theta^{2m}(t+h)}{\theta^{2m-1}(t)} \right) = -c \frac{\theta^{2m}(t)}{\theta^{2m-1}(t)} = -c\theta(t).
\end{equation}

Let us now define the auxiliary function $g(t) = \theta(t)e^{ct}$. We compute the upper right  Dini derivative of $g(t)$:
\begin{equation}
D^+ g(t) := \limsup_{h \to 0^+} \frac{\theta(t+h)e^{c(t+h)} - \theta(t)e^{ct}}{h}.
\end{equation}
Using the expansion $e^{ch} = 1 + ch + o(h)$, we have
\begin{equation}
D^+ g(t) = e^{ct} \limsup_{h \to 0^+} \left[ \frac{\theta(t+h) - \theta(t)}{h} + c\theta(t+h) \right],
\end{equation}
which, with \eqref{Dinithetaleq}, leads to
\begin{equation}
D^+ g(t) \leq e^{ct} [-c\theta(t) + c\theta(t)] = 0.
\end{equation}
Since $g(t)$ is continuous and its upper right  Dini derivative is non-positive, $g(t)$ is non-increasing on $[0, T]$ (see, for example, \cite[Chaper 5, Section 1, Proposition p. 99]{1988-Royden-book}). Therefore, we have $g(t)\leq g(0)$, which gives
\begin{equation}
\theta(t)e^{ct} \leq \theta(0)e^{c(0)} = \theta(0).
\end{equation}
This concludes the proof of Lemma~\ref{lemma-est-decreasing}.
\qed
\color{black}

\section{Proof of Theorem \ref{thm:open} and Proposition \ref{prop:mp}}
\label{app:wellposedopen}
We start with the proof of Theorem \ref{thm:open}. Since the conservation laws of \eqref{eq:open} are decoupled in open-loop, one only needs to establish the result for a single scalar equation. Therefore, in the following, index $i$ is dropped to simplify the notations. For a scalar equation, the existence of a unique entropy solution in the sense of Definition \ref{def:kru} which has strong boundary traces is given in \cite[Theorem 1.1--1.2]{CKK09}. Deriving the estimate is relatively classical: using a doubling of variable \cite[Proof of Theorem 1; (3.2)]{Kruzhkov1970} and from the definition of entropy solutions one obtains the following, for any $\phi\in C^{1}_{c}([0,T);\mathbb{R}_{+})$ and
$\chi\in \{C^{1}((0,1);\mathbb{R}_{+})\;|\; \chi (0)=\chi(1)=0\}$,
\begin{multline}
\label{eq:defopenth}
\int_{0}^{T}\int_{0}^{1} \Big( |u(t,x)-v(t,x)|\phi'(t)\chi(x) + \text{sgn}(u(t,x)-v(t,x))[f(u(t,x))-f(v(t,x))]\chi'(x)\phi(t) \Big) dxdt\\
+\int_{0}^{1}|u_{0}-v_{0}|\chi(x)\phi(0) dx\geq 0.
\end{multline}
Setting
\begin{equation}
\chi_{\varepsilon} :=\begin{cases}
 \frac{x}{\varepsilon}\; \text{ for }x\in[0,\varepsilon],\\
1\;\text{ for }x\in[\varepsilon,1-\varepsilon),\\
\frac{1-x}{\varepsilon}\text{ otherwise}.
 \end{cases}
 \end{equation}
 equation \eqref{eq:defopenth} holds also with $\chi_{\varepsilon}$ in place of $\chi$ (see \eqref{eq:kru1}).
We obtain by taking $\varepsilon\rightarrow 0$ and using the existence of strong traces of the solution  together with \eqref{eq:open} (see also \eqref{eq:kru1}--\eqref{Wigeq} above)
 \begin{equation}
 \begin{split}
&\int_{0}^{T}\int_{0}^{1}|u(t,x)-v(t,x)|\phi'(t)\,dx\,dt+\int_{0}^{1}|u_{0}-v_{0}|\phi(0) dx\\
& -\int_{0}^{T} \text{sgn}(u(t,1)-v(t,1))(f(u(t,1))-f(v(t,1)))\phi(t)dt\\
&+\int_{0}^{T} \text{sgn}(w(t)-z(t))(f(w(t))-f(z(t)))\phi(t)dt
\geq 0.
\end{split}
 \end{equation}
Since $f$ is non-decreasing, we get
\begin{equation}
\int_{0}^{T}\int_{0}^{1}|u(t,x)-v(t,x)|\phi'(t)\,dx\,dt+\int_{0}^{1}|u_{0}-v_{0}|\phi(0) dx
+\int_{0}^{T} |f(w(t))-f(z(t))|\phi(t)dt \geq 0.
\end{equation}
 For $t$ given, choosing $\phi$ as an approximation of $\mathbf{1}_{[0,t]}$, the indicator function of $[0,t]$ and letting it converge to $\mathbf{1}_{[0,t]}$, we have exactly \eqref{eq:L1stab}. This ends the proof of Theorem \ref{thm:open}.

For Proposition \ref{prop:mp}, one only needs to note that, thanks to Remark \ref{rmk:Kruz}, one can choose the entropy $\eta: z\rightarrow (u-k)_{+}:=\max(u-k,0)$ for any $k\in\mathbb{R}$. The associated entropy flux  is $q: z\rightarrow (f(u)-f(k))_{+}$ because $f$ is increasing. Then, proceeding as above, we obtain, for any $k\in\mathbb{R}$ and for any $\phi\in C_{c}^{1}([0,T);\mathbb{R}_{+})$,
 \begin{equation}
\int_{0}^{T}\int_{0}^{1}(u(t,x)-k)_{+}\phi'(t)\,dx\,dt+\int_{0}^{1}(u_{0}(x) - k)_{+}\phi(0) dx
+\int_{0}^{T} (f(w(t))-f(k))_{+}\phi(t)dt \geq 0.
\end{equation}
Thus, for any $t\in(0,T)$, choosing $\phi$ as an approximation of the characteristic function of $[0,t)$, we obtain
\begin{equation}
\int_{0}^{1}(u(t,x)-k)_{+}dx \leq \int_{0}^{1}(u_{0}(x)-k)_{+}dx + \int_{0}^{t} (f(w(t))-f(k))_{+}dt.
\end{equation}
Now, assume that \eqref{eq:propmp} does not hold, then one can choose\footnote{Again, this entropy is not $C^{1}$, however \eqref{eq:kru-convex} still hold for continuous, convex and piecewise $C^{1}$ functions by approaching them weakly by convex $C^{1}$ functions.} $$k\in \left(\max\Bigl\{\|u_0\|_{L^\infty(0,1)},\|w\|_{L^\infty(0,T)}\Bigr\},\|u_{0}\|_{L^{\infty}(0,T)\times(0,1)}\right)$$ and get that
\begin{equation}
\int_{0}^{1}(u(t,x)-k)_{+}dx \leq 0,
\end{equation}
which implies that $u(t,x) \leq k$ a.e. on $(0,T)\times (0,1)$ but contradicts the fact that $k<\|u_{0}\|_{L^{\infty}(0,T)\times(0,1)}$.

\section{Proof of Lemma~\ref{lem:outflow_indep}}
\label{app:delay}
\begin{proof}
Lemma~\ref{lem:outflow_indep}
 is a finite speed of propagation statement: data at the input boundary $x=0$ cannot influence a neighborhood of $x=1$ before the travel time $\bar{\delta}$. Let us deal with the scalar case, and since \eqref{eq:open} is diagonal the non-scalar case follows directly. Let $I(t) := (\bar{\delta}^{-1} (t-\tilde{t}),1)$ defined for $t\in[0,\bar{\delta})$, we want to study
\begin{equation}
E(t) := \int_{I(t)\cap(0,1)} |u^{w}(t,x)-u^{z}(t,x)|dx.
\end{equation}
Note that $E$ is absolutely continuous since $u^{w}$ and $u^{z}$ belong to $C([0,T];L^{1}(\mathbb{R}))$. Using the same doubling of variables as in Appendix \ref{app:wellposedopen} (see also \cite[Theorem 1, (3.2)]{Kruzhkov1970}) but with $\chi_{\varepsilon}(x)$ defined by
\begin{equation}
\chi_{\varepsilon}(t,x) :=\begin{cases}
0\;\text{ for }x\in[0,\bar{\delta}^{-1}(t-\tilde{t})_{+}]\\
 \frac{x-\bar{\delta}^{-1}(t-\tilde{t})_{+}}{\varepsilon}\; \text{ for }x\in[\bar{\delta}^{-1}(t-\tilde{t})_{+},\bar{\delta}^{-1}(t-\tilde{t})_{+}+\varepsilon],\\
1\;\text{ for }x\in[\bar{\delta}^{-1}(t-\tilde{t})_{+}+\varepsilon,1-\varepsilon),\\
\frac{1-x}{\varepsilon}\text{ otherwise}.
 \end{cases}
 \end{equation}
 one has
\begin{multline}\label{eq:dissipation}
\int_{0}^{T}\int_{\bar{\delta}^{-1}t}^{1}|u^{w}(t,x)-u^{z}(t,x)|\phi'(t)\,dx\,dt- \bar{\delta}^{-1}\int_{0}^{T}|u^{w}(t,\bar{\delta}^{-1}(t-\tilde{t})_{+})-u^{z}(t,\bar{\delta}^{-1}(t-\tilde{t})_{+})|\phi(t)dt\\
-\int_{0}^{T} \text{sgn}(u^{w}(t,1)-v^{z}(t,1))(f(u^{w}(t,1))-f(u^{z}(t,1)))\phi(t)dt\\
+\int_{0}^{T} \text{sgn}(u^{w}(t,\bar{\delta}^{-1}(t-\tilde{t})_{+})-u^{z}(t,\bar{\delta}^{-1}(t-\tilde{t})_{+})
(f(u^{w}(t,\bar{\delta}^{-1}(t-\tilde{t})_{+}))-f(u^{z}(t,\bar{\delta}^{-1}(t-\tilde{t})_{+})))\phi(t)dt
\geq 0.
\end{multline}
where we used that $\partial_{t}\chi_{\varepsilon}(t,x) = -(\bar{\delta}\varepsilon)^{-1}\mathbf{1}_{\left(\bar{\delta}(t-\tilde{t})_{+},\bar{\delta}(t-\tilde{t})_{+}+\varepsilon \right)}(x)\mathbf{1}_{(\tilde{t},T)}(t)$ (here $\mathbf{1}_{[a,b]}$ refers to the indicator function of $[a,b]$) and
\begin{multline}
\lim\limits_{\varepsilon\rightarrow 0} -\frac{\bar{\delta}^{-1}}{\varepsilon}\mathbf{1}_{t>\tilde{t}} \int_{0}^{T}\int_{\bar{\delta}^{-1}(t-\tilde{t})_{+}}^{\bar{\delta}^{-1}(t-\tilde{t})_{+}+\varepsilon}|u^{w}(t,x)-u^{z}(t,x)|\phi(t)dxdt =
\\
- \bar{\delta}^{-1}\int_{0}^{T}|u^{w}(t,\bar{\delta}^{-1}(t-\tilde{t})_{+})-u^{z}(t,\bar{\delta}^{-1}(t-\tilde{t})_{+})|\phi(t)dt.
\end{multline}
Using now that $\bar{\delta}^{-1}>|f'(s)|$ for any $s\in[\|u^{w}\|_{L^{\infty}((0,T)\times(0,1))},\|u^{z}\|_{L^{\infty}((0,T)\times(0,1))}]$ (see Proposition \ref{prop:mp} and the definition of $\bar{\delta}$ given by \eqref{eq:delta}), we have
\begin{multline}
\label{eq:conseq0}
\int_{\tilde{t}}^{T}\left[\text{sgn}(u^{w}(t,\bar{\delta}^{-1}(t-\tilde{t})_{+})-u^{z}(t,\bar{\delta}^{-1}(t-\tilde{t})_{+})(f(u^{w}(t,\bar{\delta}^{-1}(t-\tilde{t})))-f(u^{z}(t,\bar{\delta}^{-1}(t-\tilde{t})_{+})))\right.\\
\left.-\bar{\delta}^{-1}|u^{w}(t,\bar{\delta}^{-1}(t-\tilde{t})_{+})-u^{z}(t,\bar{\delta}^{-1}(t-\tilde{t})_{+})|\right]\phi(t) dt \leq 0,
\end{multline}
and, since $w=z$ a.e. on $(0,\tilde{t})$ by assumption, and using \eqref{eq:open}
\begin{equation}
\label{eq:conseq1}
\int_{0}^{\tilde{t}}\text{sgn}(u^{w}(t,0)-u^{z}(t,0))(f(u^{w}(t,0))-f(u^{z}(t,0)))\phi(t) dt = 0.
\end{equation}
As a consequence from \eqref{eq:conseq0}, \eqref{eq:conseq1}, choosing $\phi$ as an approximation of $\mathbf{1}_{[0,t)}$ in \eqref{eq:dissipation}, we have
\begin{multline}\label{eq:dissipation-new}
\int_{\bar{\delta}^{-1}(t-\tilde{t})}^{1}|u^{w}(t,x)-u^{z}(t,x)|dx
\\
\leq -\int_{0}^{t} \text{sgn}(u^{w}(t,1)-v^{z}(t,1))(f(u^{w}(t,1))-f(u^{z}(t,1)))dt\leq 0.
\end{multline}
Hence, $u^{w} = u^{z}$ a.e. on $(\bar{\delta}^{-1}(t-\tilde{t})_{+},1)$ as long as $t<\tilde{t}+\bar{\delta}$ and since both solutions admit strong traces at $x=1$ we deduce that $u^{w}(t,1)= u^{z}(t,1)$ and this holds for a.e. $t\in(0,\tilde{t}+\bar{\delta})$.
\end{proof}

\section*{Acknowledgements}
The authors would like to thank the ANR-Tremplin StarPDE (ANR-24-ERCS-0010) and the Hi!Paris Chair DESCARTES.

\bibliographystyle{plain}
\bibliography{mcss-biblio}


\end{document}